\newtheorem{Definition}{Definition}
\newtheorem{Theorem}{Theorem}
\newtheorem{Proposition}{Proposition}
\newtheorem{Remark}{Remark}
\renewcommand{\phi}{\varphi}
\newcommand{\ca}{\mathcal{A}}
\newcommand{\cb}{\mathcal{B}}
\newcommand{\cf}{\mathcal{F}}
\newcommand{\cj}{\mathcal{J}}
\newcommand{\ck}{\mathcal{K}}
\newcommand{\cl}{\mathcal{L}}
\newcommand{\cm}{\mathcal{M}}
\newcommand{\cv}{\mathcal{V}}
\newcommand{\trans}{{\scriptscriptstyle \mathrm{T}}}
\newcommand{\LPT}{{L_2(\mathds{T})}}
\newcommand{\LPTT}{{L_2(\mathds{T}^2)}}
\newcommand{\BasisT}{\{q(i,\cdot)\}_{i=0}^\infty}
\newcommand{\BasisTT}{\{q(i,\cdot) \otimes q(j,\cdot)\}_{i,j=0}^\infty}
\author{Konstantin A. Rybakov}
\title[Spectral Representation and Simulation of Fractional Brownian Motion]{Spectral Representation and Simulation of \\ Fractional Brownian Motion}
\begin{document}

\maketitle

\textbf{Abstract.} The paper gives a new representation for the fractional Brownian motion that can be applied to simulate this self-similar random process in continuous time. Such a representation is based on the spectral form of mathematical description and the spectral method. The Legendre polynomials are used as the orthonormal basis. The paper contains all the necessary algorithms and their theoretical foundation, as well as the results of numerical experiments.

\vskip 0.5ex

\textbf{Keywords:} approximation; fractional Brownian motion; Legendre polynomials; simulation; spectral characteristic; spectral form of mathematical description; spectral method

\vskip 0.5ex

\textbf{MSC:} 60G22; 60H35

\makeatletter{\renewcommand*{\@makefnmark}{}
\footnotetext{Email: rkoffice@mail.ru}
\footnotetext{Citation: Rybakov, K. Spectral representation and simulation of fractional Brownian motion. {\em Computation} {\bf 2025}, {\em 13(1)},~19. \url{https://doi.org/10.3390/computation13010019}}\makeatother}

\section{Introduction}\label{secIntro}

When studying many phenomena in dynamics, it is necessary to take into account the presence of random factors. Their mathematical description usually involves the theory of random processes, in particular, the stochastic calculus and the theory of stochastic differential equations~\cite{Oks_00}. The most developed part of this theory assumes that random factors are simulated by the Brownian motion, the Poisson process, or the mixture of them, i.e., the L\'{e}vy process. However, the statistical analysis of data in some applied problems shows that such processes are insufficient to adequately describe the corresponding phenomena~\cite{LelTaqWilWil_CCR93, Nua_ICM06}.

The fractional Brownian motion is a generalization of the Brownian motion~\cite{ManNess_SR68}; it refers to self-similar random processes~\cite{Shi_99, Taq_SS13}. There are various definitions of stochastic integrals with respect to the fractional Brownian motion, e.g., the Wiener integral, the Wick--It\^{o}--Skorohod integral, and the Stratonovich integral. The definitions and properties of such stochastic integrals are presented in~\cite{Zah_PTRF98, DecUst_PA99, DunHuPas_SICON00, Leo_Ber00, AloLeoNua_TJM01, BiaHuOksZha_08, Mis_08}. They form the basis of the theory of fractional stochastic differential equations~\cite{Mis_08, MiaYan_MPE15, DinNie_Entr17} or fractional integral equations~\cite{HasKhoMal_TJM17}.

Based on the fractional Brownian motion, one can define the fractional Ornstein--Uhlenbeck process~\cite{HogFre_06, Nua_AFST06}, the fractional Bessel process~\cite{Nua_AFST06}, the fractional Brownian bridge~\cite{DelWie_PRE16}, etc. The fractional Brownian motion and random processes driven by it are used to describe the traffic in computer networks~\cite{Nor_SAC95, Far_PhD00}, the weather~\cite{BroSyrZer_QF02, BiaHuOksZha_08} and turbulence~\cite{Nua_ICM06, BiaHuOksZha_08}, some problems in geophysics~\cite{MolLiuSzu_WRR97, Sum_NPG02}. As for the Brownian motion, the fractional Brownian motion is used in financial mathematics, namely in the fractional Black--Scholes model~\cite{DinNie_Entr17, DaiHey_JAMSA96, AlhMisOma_AMS17}, the stochastic volatility model~\cite{Nua_ICM06}, the investment portfolio optimization problem~\cite{Dun_JMAA13}.

Surveys of different representations of the fractional Brownian motion can be found in~\cite{BiaHuOksZha_08, Pic_SP11}. A special place in the theory of the fractional Brownian motion is occupied by various integral and functional series representations. The main result of this paper is based on the integral representation proposed by L. Decreusefond and A.S. \"{U}st\"{u}nel in~\cite{DecUst_ESAIM99}. This representation can be considered as a relation between input and output signals in a linear dynamic system, and then the spectral form of mathematical description of control systems~\cite{SolSemPeshNed_79, Rybin_11, BagMikPanRyb_Springer20} is applied for it. The spectral form assumes that input and output signals in a linear dynamic system are represented by spectral characteristics, and the system itself should be represented by the two-dimensional spectral characteristic. Elements of spectral characteristics are expansion coefficients of corresponding functions or random processes into orthogonal series with respect to the orthonormal basis.

Note that the representation of the fractional Brownian motion in the form of a series was applied earlier. Typically, trigonometric functions or complex exponential functions were chosen as the basis~\cite{Pic_SP11}. Also hat functions were used~\cite{HasKhoMal_TJM17}, the wavelet-based analysis was carried out~\cite{Fla_TIT92}. However, in problems related to the simulation of the Brownian motion, the Legendre polynomials can be chosen as the orthonormal basis. The advantage of such a choice is clearly demonstrated in~\cite{Kuz_JVMMF19, Kuz_DUPU20}. For trigonometric functions, it is better to only use cosines in some problems involving the Brownian motion~\cite{Ryb_Symm23}. The choice of the Walsh and Haar functions is useful for comparison with the numerical integration~\cite{Ryb_DUPU23}. In this paper, the Legendre polynomials are used for a new representation of the fractional Brownian motion and for its simulation.

To confirm theoretical results and to obtain the numerical results in applied problems, it is necessary to develop methods to simulate the fractional Brownian motion. Detailed surveys of exact and approximate simulation methods are given in~\cite{Die_MS02, KijTam_13}, starting with a general approach to simulate Gaussian random processes based on the Cholesky decomposition for the matrix of covariance function values. These surveys also include the numerical integration, the expansion into orthogonal series by trigonometric functions, the application of the fast Fourier transform or wavelets, and other methods. Additionally, the spectral methods described in~\cite{DieMan_PEIS03} should be mentioned to complement the survey~\cite{Die_MS02}.

The aim of this paper is to present the fractional Brownian motion in the spectral form of mathematical description by the Legendre polynomials. In~\cite{DecUst_PA99}, the expansion for the kernel of the linear integral operator defining the fractional Brownian motion as a function of one variable (the second variable is fixed) into orthogonal series with respect to some basis is proposed. Here, the expansion of the kernel as a function of two variables is used, and all the transformations are performed by matrix algebra operations. The obtained result is applied to simulate this random process in continuous time, i.e., the proposed approach is implemented in the form of certain algorithms.

The paper also considers the Liouville (Riemann--Liouville) fractional Brownian motion~\cite{ManNess_SR68, Pic_SP11}. It can be considered as a random process with some properties similar to those of the fractional Brownian motion. It can also be used in financial mathematics~\cite{ComRen_MF98}.

The rest of the paper is organized as follows. Section~\ref{secFBM} provides the necessary definitions, notations, and properties for the fractional Brownian motion. Elements of the spectral form of mathematical description are described in Section~\ref{secSpBasic}. Section~\ref{secLegPoly} presents some relations for the Legendre polynomials. They are used in Sections~\ref{secSpFunc} and \ref{secSpOper} to calculate elements of the spectral characteristic of the power function and spectral characteristics of both the multiplication operator with power function as a multiplier and the fractional integration operator. Computationally stable algorithms to calculate spectral characteristics are given in Section~\ref{secSpComp}. Section~\ref{secSpFBM} considers the approximate representation of the fractional Brownian motion, and part of this section contains numerical experiments. Finally, Section~\ref{secSpConcl} presents the conclusions of this paper. The supporting part of the paper is given in Appendix.

\section{Fractional Brownian Motion}\label{secFBM}

In this section, we introduce the necessary definitions and notations, and also indicate basic properties for the fractional Brownian motion.

\begin{Definition}
The fractional Brownian motion $B_H(\cdot)$, $H \in (0,1)$, defined on the complete probability space $(\Omega,\cf,\mathrm{P})$ is the Gaussian random process satisfying the following conditions~\cite{BiaHuOksZha_08, Mis_08}:
\begin{align*}
  (1) & ~ B_H(0) = 0; \\
  (2) & ~ \mathrm{E} B_H(t) = 0 \ \ \ \forall \, t \geqslant 0; \\
  (3) & ~ \mathrm{E} B_H(t) B_H(\tau) = R_H(t,\tau) \ \ \ \forall \, t,\tau \geqslant 0,
\end{align*}
where
\begin{equation}\label{eqDefFBMCov}
  R_H(t,\tau) = \frac{t^{2H} + \tau^{2H} - |t-\tau|^{2H}}{2},
\end{equation}
and $\mathrm{E}$ means the mathematical expectation.
\end{Definition}

In this definition, $R_H(\cdot)$ is the covariance function of the self-similar random process $B_H(\cdot)$. In general~\cite{Shi_99}, a random process $X(\cdot)$ is called self-similar or fractal if for any $a > 0$ there exists $b > 0$ such that
\[
  \mathrm{Law} \bigl( X(at), t \geqslant 0 \bigr) = \mathrm{Law} \bigl( bX(t), t \geqslant 0 \bigr), \ \ \
\]
where $\mathrm{Law}(\cdot)$ denotes the distribution law. For self-similar random processes, a change in the time scale $t \mapsto at$ is equivalent to a change in the phase scale $x \mapsto bx$. Under additional condition $b = a^H$, we obtain
\[
  \mathrm{Law} \bigl( X(at), t \geqslant 0 \bigr) = \mathrm{Law} \bigl( a^H X(t), t \geqslant 0 \bigr),
\]
where the constant $H$ is called the Hurst index, and $1/H$ is called the fractal dimension of the random process $X(\cdot)$. The $n$th-order moment functions of the self-similar random process are the $nH$th-order homogeneous functions, e.g., for the mathematical expectation and the second-order moment function (similarly, for the covariance function) we have
\[
  \mathrm{E} X(at) = a^H \mathrm{E} X(t), \ \ \ \mathrm{E} X(at) X(a\tau) = a^{2H} \mathrm{E} X(t) X(\tau) \ \ \ \forall \, t,\tau \geqslant 0.
\]

The typical example of the self-similar random process is the Brownian motion, or the Wiener process, $B(\cdot)$: $\mathrm{Law}(B(at)) = \mathrm{Law}(\sqrt{a} B(t))$. For the Brownian motion, the Hurst index is $1/2$, and the covariance function is $(t + \tau - |t-\tau|)/2 = \min\{t,\tau\}$. It is generalized by the fractional Brownian motion $B_H(\cdot)$ with the Hurst index $H \in (0,1)$. The properties of the random process $B_H(\cdot)$ depend on the Hurst index, and three cases are usually distinguished: $H \in (0,1/2)$ (hereinafter the condition $H < 1/2$ is used), $H = 1/2$, and $H \in (1/2,1)$ (hereinafter the condition $H > 1/2$ is used).

The function $R_H(\cdot)$ first appeared in~\cite{Kol_DAN40}, and later it was applied in~\cite{ManNess_SR68} as the covariance function of the random process $B_H(\cdot)$, where the representation
\[
  B_H(t) = \frac{1}{\Gamma(H+1/2)} \biggl( \int_{-\infty}^0 \bigl( (t-\tau)^{H-1/2} - (-\tau)^{H-1/2} \bigr) dB(\tau) + \int_0^t (t-\tau)^{H-1/2} dB(\tau) \biggr)
\]
was used under condition that $B(\cdot)$ is the Brownian motion defined on $\mathds{R} = (-\infty,+\infty)$.

However, the following representation is more preferable~\cite{BiaHuOksZha_08, DecUst_PA99, DecUst_ESAIM99}:
\begin{equation}\label{eqDefFBMFinite}
  B_H(t) = \int_0^t k_H(t,\tau) dB(\tau),
\end{equation}
since it does not contain a term with the improper integral, and here $B(\cdot)$ is the Brownian motion defined on $\mathds{R}_+ = [0,+\infty)$.

The disadvantage of Equation~\eqref{eqDefFBMFinite} is that the function $k_H(\cdot)$ is not expressed in a simple way through elementary functions~\cite{BiaHuOksZha_08}, namely
\[
  k_H(t,\tau) = a_H (t-\tau)^{H-1/2} \, {}_2F_1 \biggl( \frac{1}{2} - H, H - \frac{1}{2}, H + \frac{1}{2}, 1 - \frac{t}{\tau} \biggr) 1(t-\tau),
\]
where
\[
  a_H = \sqrt{\frac{2H \Gamma(H+1/2) \Gamma(3/2-H)}{\Gamma(2-2H)}}, \ \ \ \text{or for $H \neq 1/2$} \ \ \ a_H = \sqrt{\frac{\pi H (1-2H)}{\Gamma(2-2H) \cos \pi H}}.
\]

In these relations, ${}_2F_1(\cdot)$ is the hypergeometric function, $\Gamma(\cdot)$ is the gamma function, and $1(\cdot)$ is the Heaviside function, i.e., $1(\eta) = 1$ for $\eta > 0$ and $1(\eta) = 0$ for $\eta \leqslant 0$.

The function $k_H(\cdot)$ corresponds to the linear integral operator ($k_H(\cdot)$ is its kernel):
\begin{equation}\label{eqLinearIO}
  \ck_H \phi(\cdot) = \int_0^{(\cdot)} k_H(\cdot,\tau) \phi(\tau) d\tau,
\end{equation}
where $\phi(\cdot)$ is an arbitrary function from a suitable function space. The function $k_H(\cdot)$ is related to the covariance function $R_H(\cdot)$ by the formula
\begin{equation}\label{eqFBMCov}
  R_H(t,\tau) = \int_0^{\min\{t,\tau\}} {k_H(t,\theta) k_H(\tau,\theta) d\theta}.
\end{equation}

In the following, we will use notations $\ca_{f_\alpha}$ and $\cj_{0+}^\beta$ for the multiplication operator with multiplier $f_\alpha(t) = t^\alpha$, $\alpha > -1/2$, and the left-sided Riemann--Liouville integration operator of fractional order $\beta > 0$, respectively. Definitions for them are given in the next section.

Taking into account the properties of the composition of two fractional integrals with power functions~\cite{SamKilMar_87}, the operator $\ck_H$ is represented as follows~\cite{DecUst_PA99}:
\begin{equation}\label{eqDefFBMFiniteOper}
  \ck_H = \left\{ \begin{array}{ll}
    a_H \, \cj_{0+}^{2H} \circ \ca_{f_{1/2-H}} \circ \cj_{0+}^{1/2-H} \circ \ca_{f_{H-1/2}} & \text{for} ~ H < 1/2 \\
    a_H \, \cj_{0+}^1 \circ \ca_{f_{H-1/2}} \circ \cj_{0+}^{H-1/2} \circ \ca_{f_{1/2-H}} & \text{for} ~ H > 1/2,
  \end{array} \right.
\end{equation}
where $\circ$ denotes the composition of operators. In~\cite{BiaHuOksZha_08}, an equivalent form with the constant
\[
  a_H = \left\{ \begin{array}{ll}
    b_H \Gamma(H+1/2) & \text{for} ~ H < 1/2 \\
    c_H \Gamma(H-1/2) & \text{for} ~ H > 1/2
  \end{array} \right.
\]
is considered, where
\[
  b_H = \sqrt{\frac{2H}{(1-2H) \beta(1-2H,H+1/2)}}, \ \ \ c_H = \sqrt{\frac{H(2H-1)}{\beta(2-2H,H-1/2)}},
\]
and $\beta(\cdot)$ is the beta function ($a_H = b_H \Gamma(H+1/2)$ also for $H > 1/2$).

Given $H = 1/2$, we have $a_H = 1$ and
\begin{align*}
  & \cj_{0+}^{2H} \circ \ca_{f_{1/2-H}} \circ \cj_{0+}^{1/2-H} \circ \ca_{f_{H-1/2}} = \cj_{0+}^1 \circ \ca_{f_0} \circ \cj_{0+}^0 \circ \ca_{f_0} = \cj_{0+}^1, \\
  & \cj_{0+}^1 \circ \ca_{f_{H-1/2}} \circ \cj_{0+}^{H-1/2} \circ \ca_{f_{1/2-H}} = \cj_{0+}^1 \circ \ca_{f_0} \circ \cj_{0+}^0 \circ \ca_{f_0} = \cj_{0+}^1,
\end{align*}
since in this case $\ca_{f_0}$ and $\cj_{0+}^0$ are identity operators, i.e., $\ck_H = \cj_{0+}^1$ is the integration operator and $k_H(t,\tau) = 1(t-\tau)$ (further, the integration operator is denoted by $\cj$ to simplify notations).

Note that Equation~\eqref{eqLinearIO} connects input and output signals in a linear dynamic system, for which $k_H(\cdot)$ is the impulse response function. Thus, $k_H(\cdot)$ completely defines the linear dynamic system in one of the forms of mathematical description of control systems~\cite{SolSemPeshNed_79}. With this interpretation, Equation~\eqref{eqDefFBMFinite} specifies the output signal under condition that the input signal is the Gaussian white noise. In fact, this equality defines the shaping filter for the fractional Brownian motion $B_H(\cdot)$.

Another form of mathematical description of control systems is the spectral form~\cite{SolSemPeshNed_79}. It assumes that input and output signals in a linear dynamic system are represented by spectral characteristics, and the system itself should be represented by the two-dimensional spectral characteristic. These characteristics are defined in the next section.

The aim of the paper and the main result are a new representation of both the function $k_H(\cdot)$ and the fractional Brownian motion $B_H(\cdot)$ in the spectral form of mathematical description. This result can be used to simulate the random process $B_H(\cdot)$ in continuous time.

\section{Spectral Form of Mathematical Description}\label{secSpBasic}

Further, the spectral form of mathematical description and the spectral method~\cite{SolSemPeshNed_79} are used to represent the fractional Brownian motion. It is assumed that $t \in \mathds{T} = [0,T]$ and the orthonormal basis $\BasisT$ of the space $\LPT$ is given. The spectral form of mathematical description implies that functions of one variable and random processes should be represented by infinite column matrices, whose elements are expansion coefficients for these functions and random processes into orthogonal series with respect to the orthonormal basis $\BasisT$. Functions of two variables should be represented by infinite matrices with elements that are expansion coefficients for these functions into orthogonal series with respect to the orthonormal basis $\BasisTT$ of the space $\LPTT$.

Functions of two variables can define linear operators, e.g., the function $k_H(\cdot)$ defines the linear operator $\ck_H$ according to Equation~\eqref{eqLinearIO}. The representation as the infinite matrix is used for the covariance function $R_H(\cdot)$, it also defines the linear operator.

Let us define spectral characteristics of functions of one and two variables~\cite{BagMikPanRyb_Springer20, SolSemPeshNed_79} using standard notations for inner products $(\cdot,\cdot)_\LPT$ and $(\cdot,\cdot)_\LPTT$ in $\LPT$ and $\LPTT$~\cite{Bal_81}, respectively.

\begin{Definition}\label{defSpFuncT}
The spectral characteristic of a function $\phi(\cdot) \in \LPT$ is the infinite column matrix $\Phi$ with elements
\[
  \Phi_i = \bigl( q(i,\cdot),\phi(\cdot) \bigr)_\LPT, \ \ \ i = 0,1,2,\dots
\]
\end{Definition}

\begin{Definition}\label{defSpFuncTT}
The two-dimensional spectral characteristic of a function $k(\cdot) \in \LPTT$ is the infinite matrix $K$ with elements
\[
  K_{ij} = \bigl( q(i,\cdot) \otimes q(j,\cdot),k(\cdot) \bigr)_\LPTT, \ \ \ i,j = 0,1,2,\dots
\]
\end{Definition}

The correspondence between functions and their spectral characteristics is denoted as follows:
\begin{align*}
  \Phi & = \mathbb{S} \bigl[ \phi(\cdot) \bigr], & \phi(\cdot) & = \mathbb{S}^{-1}[\Phi] = \sum\limits_{i=0}^\infty {\Phi_i q(i,\cdot)}, \\
  K & = \mathbb{S} \bigl[ k(\cdot) \bigr], & k(\cdot) & = \mathbb{S}^{-1}[K] = \sum\limits_{i,j=0}^\infty {K_{ij} q(i,\cdot) \otimes q(j,\cdot)},
\end{align*}
where $\mathbb{S}$ means the spectral transform and $\mathbb{S}^{-1}$ means the spectral inversion, i.e., $\mathbb{S}$ converts the function into its spectral characteristic, and $\mathbb{S}^{-1}$ converts the spectral characteristic into the corresponding function.

Definition~\ref{defSpFuncT} can be extended to random processes, whose paths belong to $\LPT$ with probability 1. Thus, we can define the spectral characteristic of the fractional Brownian motion $B_H(\cdot)$:
\[
  \cb^H = \mathbb{S} \bigl[ B_H(\cdot) \bigr], \ \ \ \cb_i^H = \bigl( q(i,\cdot),B_H(\cdot) \bigr)_\LPT, \ \ \ i = 0,1,2,\dots,
\]
which is the infinite random column matrix. Its elements are random variables having normal distribution, since $B_H(\cdot)$ is the Gaussian random process.

The function $k(\cdot) \in \LPTT$ defines the Hilbert--Schmidt operator~\cite{Bal_81}:
\begin{equation}\label{eqDefOperHS}
  \ck \phi(\cdot) = \int_\mathds{T} k(\cdot,\tau) \phi(\tau) d\tau, \ \ \ \phi(\cdot) \in \LPT,
\end{equation}
and the two-dimensional spectral characteristic $K$ of the function $k(\cdot)$ defines the linear operator in the space of spectral characteristics of functions from $\LPT$.

\begin{Definition}\label{defSpOperT}
The spectral characteristic of a linear operator $\ck \colon D_\ck \to R_\ck$, where $D_\ck, R_\ck \subseteq \LPT$, is the infinite matrix $K$ with elements
\[
  K_{ij} = \bigl( q(i,\cdot),\ck q(j,\cdot) \bigr)_\LPT, \ \ \ i,j = 0,1,2,\dots
\]
\end{Definition}

The correspondence between linear operators and their spectral characteristics is denoted as follows:
\[
  K = \mathbb{S}[\ck],
\]
where $\mathbb{S}$ also means the spectral transform, i.e., $\mathbb{S}$ also associates the linear operator with its spectral characteristic.

Many linear operators on $\LPT$ or on some of its subspace are also described by Equation~\eqref{eqDefOperHS}, but in this case the condition $k(\cdot) \in \LPTT$ is not necessarily satisfied. Infinite matrices are also used to represent such linear operators~\cite{SolSemPeshNed_79}, e.g., for the identity operator we have $k(t,\tau) = \delta(t-\tau)$, where $\delta(\cdot)$ is the Dirac delta function, and its spectral characteristic is the infinite identity matrix $E$.

Important relations that connect spectral characteristics of functions and linear operators~\cite{SolSemPeshNed_79, BagMikPanRyb_Springer20, Ryb_Math23} are
\begin{equation}\label{eqSpOper1}
  \mu(\cdot) = \ck \phi(\cdot), \ \ \ \ck \colon D_\ck \to R_\ck, \ \ \ \Phi = \mathbb{S} \bigl[ \phi(\cdot) \bigr], \ \ \ M = \mathbb{S} \bigl[ \mu(\cdot) \bigr] \ \ \ \Longleftrightarrow \ \ \ M = K \Phi,
\end{equation}
where $\BasisT \subseteq D_\ck$ and $D_\ck, R_\ck \subseteq \LPT$;
\begin{equation}\label{eqSpOper2}
  \ck = \cl \circ \cm, \ \ \ \cl \colon D_\cl \to R_\cl, \ \ \ \cm \colon D_\cm \to R_\cm \ \ \ \Longleftrightarrow \ \ \ K = L M,
\end{equation}
where $\BasisT \subseteq D_\cl \cap D_\cm$, $R_\cm \subseteq D_\cl$ and $D_\cl, R_\cl, D_\cm, R_\cm \subseteq \LPT$.

Spectral characteristics of functions and linear operators should be defined relative to the basis of $\LPT$, e.g., for spectral characteristics of differentiation and integration operators, as well as multiplication operators with some elementary multipliers, such analytical expressions were obtained relative to various orthonormal bases: the Legendre polynomials, trigonometric functions and complex exponential functions, and the Walsh and Haar functions. There are explicit and implicit expressions for elements of spectral characteristics of the above operators relative to bases formed by splines or wavelets. Some of these analytical expressions are published in~\cite{SolSemPeshNed_79, Rybin_11, BagMikPanRyb_Springer20, Ryb_Math23}. However, it is not always possible to obtain analytical expressions for their elements.

This paper discusses the following linear operators:

1.\;The multiplication operator with multiplier $f_\alpha(t) = t^\alpha$, $\alpha > -1/2$:
\begin{equation}\label{eqDefOperA}
  \mu(\cdot) = \ca_{f_\alpha} \phi(\cdot) \ \ \ \Longleftrightarrow \ \ \ \mu(t) = t^\alpha \phi(t).
\end{equation}

2.\;The inversion operator (time inversion operator):
\begin{equation}\label{eqDefOperM}
  \mu(\cdot) = \cm \phi(\cdot) \ \ \ \Longleftrightarrow \ \ \ \mu(t) = \phi(T - t).
\end{equation}

3.\;The left-sided and right-sided Riemann--Liouville integration operators of fractional order $\beta > 0$:
\begin{align}
  \mu(\cdot) & = \cj_{0+}^\beta \phi(\cdot) \ \ \ \, \Longleftrightarrow \ \ \ \mu(t) = \frac{1}{\Gamma(\beta)} \int_0^t {\frac{\phi(\tau) d\tau}{(t-\tau)^{1-\beta}}}, \ \ \ t > 0, \label{eqDefOperIFracL} \\
  \mu(\cdot) & = \cj_{T-}^\beta \phi(\cdot) \ \ \ \Longleftrightarrow \ \ \ \mu(t) = \frac{1}{\Gamma(\beta)} \int_t^T {\frac{\phi(\tau) d\tau}{(\tau-t)^{1-\beta}}}, \ \ \ t < T. \label{eqDefOperIFracR}
\end{align}

For spectral characteristics of operators $\ca_{f_\alpha},\cm,\cj_{0+}^\beta,\cj_{T-}^\beta,\ck_H$, we will use notations $A^\alpha,M,P^{-\beta},D^{-\beta}, \linebreak K^H$, respectively:
\[
  A^\alpha = \mathbb{S}[\ca_{f_\alpha}], \ \ \ M = \mathbb{S}[\cm], \ \ \ P^{-\beta} = \mathbb{S}[\cj_{0+}^\beta], \ \ \ D^{-\beta} = \mathbb{S}[\cj_{T-}^\beta], \ \ \ K^H = \mathbb{S}[\ck_H].
\]

Additionally, we define the spectral characteristic of the Gaussian white noise~\cite{RybYus_IOP20, Ryb_Math23}, which is considered as the random linear functional.

\begin{Definition}\label{defSpGWN}
The spectral characteristic of the Gaussian white noise associated with the Brownian motion $B(\cdot)$ is the infinite random column matrix $\cv$ with elements
\[
  \cv_i = \int_\mathds{T} {q(i,t) dB(t)}, \ \ \ i = 0,1,2,\dots
\]
\end{Definition}

According to the properties of stochastic integrals~\cite{Oks_00}, elements of the spectral characteristic $\cv$ are independent random variables having standard normal distribution.

\begin{Theorem}\label{thmSpFBMFinite}
Let $\cb^H$ be the spectral characteristic of the fractional Brownian motion $B_H(\cdot)$, $K^H$ be the spectral characteristic of the linear operator $\ck_H$, i.e., the two-dimensional spectral characteristic of the function $k_H(\cdot)$, and $\cv$ be the spectral characteristic of the Gaussian white noise. All the spectral characteristics are defined relative to the orthonormal basis $\BasisT$ of $\LPT$. Then
\begin{equation}\label{eqSpFBMFinite}
  \cb^H = K^H \cv.
\end{equation}
\end{Theorem}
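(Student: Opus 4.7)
The strategy is to compute $\cb^H_i$ directly from its defining inner product, substitute the integral representation~\eqref{eqDefFBMFinite}, exchange the deterministic and stochastic integrals via a stochastic Fubini theorem, and identify the resulting kernel as the $i$th row of $K^H$ through the two-dimensional Fourier expansion of $k_H(\cdot)$.

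First I would verify that, after extending $k_H(\cdot)$ by zero to the whole square $\mathds{T}^2$, it lies in $\LPTT$. Setting $t=\tau$ in~\eqref{eqFBMCov} gives $\int_0^t k_H(t,\theta)^2 d\theta = R_H(t,t) = t^{2H}$, so
\[
  \|k_H(\cdot)\|_\LPTT^2 = \int_0^T t^{2H} dt = \frac{T^{2H+1}}{2H+1} < \infty.
\]
In particular, the two-dimensional spectral characteristic $K^H$ from Definition~\ref{defSpFuncTT} coincides with the operator spectral characteristic of $\ck_H$ from Definition~\ref{defSpOperT}. Applying Definition~\ref{defSpFuncT} pathwise to $B_H(\cdot)$ and inserting~\eqref{eqDefFBMFinite} yields
\[
  \cb^H_i = \int_0^T q(i,t) \, B_H(t) \, dt = \int_0^T q(i,t) \left( \int_0^T k_H(t,\tau) \, dB(\tau) \right) dt.
\]
Since $\int_0^T \int_0^T q(i,t)^2 k_H(t,\tau)^2 \, d\tau \, dt \leqslant T^{2H} \|q(i,\cdot)\|_\LPT^2 = T^{2H}$, the stochastic Fubini theorem applies and produces
\[
  \cb^H_i = \int_0^T \psi_i(\tau) \, dB(\tau), \qquad \psi_i(\tau) = \int_0^T q(i,t) \, k_H(t,\tau) \, dt.
\]

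Expanding $k_H(\cdot)$ in the orthonormal basis $\BasisTT$ as $k_H(t,\tau) = \sum_{m,n=0}^\infty K^H_{mn} \, q(m,t) q(n,\tau)$ in $\LPTT$, and integrating against $q(i,t)$ using orthonormality of $\BasisT$, I obtain $\psi_i(\tau) = \sum_{j=0}^\infty K^H_{ij} \, q(j,\tau)$ in $\LPT$, with $\sum_j (K^H_{ij})^2 < \infty$. By the It\^o isometry, the stochastic integral $\int_0^T \cdot \, dB(\tau)$ is an $L_2(\Omega)$-isometry from $\LPT$, so it commutes with this $\LPT$-convergent series. Combined with Definition~\ref{defSpGWN} this gives
\[
  \cb^H_i = \sum_{j=0}^\infty K^H_{ij} \int_0^T q(j,\tau) \, dB(\tau) = \sum_{j=0}^\infty K^H_{ij} \, \cv_j,
\]
which is exactly the $i$th component of~\eqref{eqSpFBMFinite}, so $\cb^H = K^H \cv$.

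The only delicate points are the applicability of the stochastic Fubini theorem and the commutation of the It\^o integral with the Fourier series in $\psi_i$; both rest on $k_H \in \LPTT$ together with the It\^o isometry, and these are the main (but essentially routine) obstacles. No further ingredients beyond~\eqref{eqDefFBMFinite}, \eqref{eqFBMCov}, and the definitions of the three spectral characteristics are needed.
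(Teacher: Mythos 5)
Your proposal is correct and follows essentially the same route as the paper: expand $k_H(\cdot)$ in the basis $\BasisTT$, exchange the resulting series with the stochastic integral, and read off $\cb_i^H = \sum_j K_{ij}^H \cv_j$. The only difference is that you perform the interchange in the opposite order (inner product first, then stochastic Fubini) and supply the integrability and isometry justifications that the paper's proof leaves implicit — a welcome but not structurally different refinement.
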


\begin{proof}
Represent the function $k_H(\cdot)$ as orthogonal series with respect to the orthonormal basis $\BasisTT$ of the space $\LPTT$:
\[
  k_H(\cdot) = \sum\limits_{i,j = 0}^\infty {K_{ij}^H \, q(i,\cdot) \otimes q(j,\cdot)}, \ \ \ K_{ij}^H = \bigl( q(i,\cdot) \otimes q(j,\cdot),k_H(\cdot) \bigr)_\LPTT,
\]
then, taking into account the condition $k_H(t,\tau) = 0$ for $t \leqslant \tau$, Equation~\eqref{eqDefFBMFinite} can be rewritten as
\[
  B_H(t) = \int_\mathds{T} \sum\limits_{i,j = 0}^\infty {K_{ij}^H \, q(i,t) q(j,\tau)} dB(\tau) = \sum\limits_{i,j = 0}^\infty q(i,t) K_{ij}^H \int_\mathds{T} q(j,\tau) dB(\tau) = \sum\limits_{i,j = 0}^\infty q(i,t) K_{ij}^H \cv_j.
\]

This means that for elements of the spectral characteristic $\cb^H$, the relation
\[
  \cb_i^H = \bigl( q(i,\cdot),B_H(\cdot) \bigr)_\LPT = \sum\limits_{j = 0}^\infty K_{ij}^H \cv_j, \ \ \ i = 0,1,2,\dots,
\]
is satisfied, and in the matrix form it corresponds to Equation~\eqref{eqSpFBMFinite}.
\end{proof}

Equation~\eqref{eqSpFBMFinite} is the spectral analogue of Equation~\eqref{eqDefFBMFinite}. It can be considered as a generalization of Property~\eqref{eqSpOper1}.

For the case $H = 1/2$, we obtain that $K^H = P^{-1}$ is the spectral characteristic of the integration operator $\cj$, i.e., the two-dimensional spectral characteristic of the Heaviside function $1(\cdot)$. Here, Equation~~\eqref{eqSpFBMFinite} takes the form $\cb = P^{-1} \cv$, where $\cb$ is the spectral characteristic of the Brownian motion, or the Wiener process, $B(\cdot)$, which was used in~\cite{Ryb_Math23}.

Consider the case $H \neq 1/2$. Applying Property~\eqref{eqSpOper2}, we obtain
\begin{align*}
  \mathbb{S}[\cj_{0+}^{2H} \circ \ca_{f_{1/2-H}} \circ \cj_{0+}^{1/2-H} \circ \ca_{f_{H-1/2}}] & = \mathbb{S}[\cj_{0+}^{2H}] \, \mathbb{S}[\ca_{f_{1/2-H}}] \, \mathbb{S}[\cj_{0+}^{1/2-H}] \, \mathbb{S}[\ca_{f_{H-1/2}}], \\
  \mathbb{S}[\cj_{0+}^1 \circ \ca_{f_{H-1/2}} \circ \cj_{0+}^{H-1/2} \circ \ca_{f_{1/2-H}}] & = \mathbb{S}[\cj_{0+}^1] \, \mathbb{S}[\ca_{f_{H-1/2}}] \, \mathbb{S}[\cj_{0+}^{H-1/2}] \, \mathbb{S}[\ca_{f_{1/2-H}}],
\end{align*}
therefore, the spectral characteristic $K^H$ of the operator $\ck_H$ satisfies the relation
\begin{equation}\label{eqSpFBMFiniteOper}
  K^H = \left\{
    \begin{array}{ll}
      a_H \, P^{-2H} A^{1/2-H} P^{-(1/2-H)} A^{H-1/2} & \text{for} ~ H < 1/2 \\
      a_H \, P^{-1} A^{H-1/2} P^{-(H-1/2)} A^{1/2-H} & \text{for} ~ H > 1/2.
    \end{array}
  \right.
\end{equation}

Equation~\eqref{eqSpFBMFiniteOper} is the spectral analogue of Equation~\eqref{eqDefFBMFiniteOper}. It will become constructive if we obtain analytical expressions for elements of spectral characteristics of both the multiplication operator with power function as a multiplier and the fractional integration operator.

Using the Legendre polynomials is the best choice in such a situation, e.g., for trigonometric functions, such results can only be expressed by special functions, but this is inconvenient for practical calculations. In the following sections, all the required spectral characteristics relative to the Legendre polynomials are found.

These analytical expressions can be successfully applied to solve other problems, mathematical models of which are described by equations with fractional integrals. This approach is fully consistent with the main idea of the spectral form of mathematical description: forming algorithms to calculate some basic spectral characteristics to express other spectral characteristics by matrix algebra operations.

The proposed approach allows one to obtain the representation for the Liouville fractional Brownian motion~\cite{ManNess_SR68}:
\begin{equation}\label{eqDefLFBMFinite}
  \check B_H(t) = \int_0^t \check k_H(t,\tau) dB(\tau), \ \ \ \check k_H(t,\tau) = \frac{1}{\Gamma(H+1/2)} \, (t-\tau)^{H-1/2}.
\end{equation}

The kernel $\check k_H(\cdot)$ defines the integration operator $\cj_{0+}^{H+1/2}$ of fractional order ${H+1/2}$. By analogy with Theorem~\ref{thmSpFBMFinite}, it is easy to obtain the following result for the spectral characteristic $\check\cb^H$ of the Liouville fractional Brownian motion $\check B_H(\cdot)$:
\begin{equation}\label{eqSpLFBMFinite}
  \check \cb^H = \check K^H \cv,
\end{equation}
where $\check K^H = P^{-(H+1/2)}$ is the spectral characteristic of the linear operator $\cj_{0+}^{H+1/2}$, i.e., the two-dimensional spectral characteristic of the function $\check k_H(\cdot)$.

\section{Legendre Polynomials}\label{secLegPoly}

In this section, we present explicit and recurrence relations for the Legendre polynomials, which will be used below.

The standardized Legendre polynomials $\{P_i(\cdot)\}_{i = 0}^\infty$ are defined on the interval $[-1,1]$ as follows~\cite{BatErd_53}:
\[
  P_i(x) = \frac{1}{2^i i!} \, \frac{d^i (x^2-1)^i}{dx^i}, \ \ \ i = 0,1,2,\dots
\]

They form the complete orthogonal system of functions in the space $L_2([-1,1])$ and satisfy the well-known recurrence relations:
\begin{gather}
  (i+1) P_{i+1}(x) - (2i+1) x P_i(x) + i P_{i-1}(x) = 0, \ \ \ i = 1,2,\dots, \label{eqNNLegRec1} \\
  (2i+1) P_i(x) = P'_{i+1}(x) - P'_{i-1}(x), \ \ \ i = 1,2,\dots, \label{eqNNLegRec2}
\end{gather}
where $P_0(x) = 1$ and $P_1(x) = x$, and they can be formally applied for $i = 0$ assuming $P_{-1}(x) = 0$. We additionally indicate properties of the Legendre polynomials that are needed further:
\begin{gather}
  P_i(\pm 1) = (\pm 1)^i, \ \ \ i = 0,1,2,\dots, \label{eqNNLegProp1} \\
  P_i(-x) = (-1)^i P_i(x), \ \ \ i = 0,1,2,\dots \label{eqNNLegProp2}
\end{gather}

Through substitution ${x = 2t/T - 1}$, $T > 0$, and normalization, we obtain the Legendre polynomials $\{\hat P(i,\cdot)\}_{i = 0}^\infty$ that form the complete orthonormal system of functions in the space $\LPT$, where ${\mathds{T} = [0,T]}$:
\[
  \hat P(i,t) = \sqrt{\frac{2i+1}{T}} \, P_i \biggl( \frac{2t}{T} - 1 \biggr),
\]
or
\begin{equation}\label{eqDefLeg}
  \hat P(i,t) = \sqrt{\frac{2i+1}{T}} \sum\limits_{k=0}^i {l_{ik} \, \frac{t^k}{T^k}}, \ \ \ i = 0,1,2,\dots,
\end{equation}
where
\begin{equation}\label{eqDefLegCoef}
  l_{ik} = (-1)^{i-k} C^i_{i+k} C^{i-k}_i, \ \ \ C^i_k = \frac{k!}{i! (k - i)!}.
\end{equation}

Recurrence relations for the Legendre polynomials $\{\hat P_i(\cdot)\}_{i = 0}^\infty$ are obtained from Equations~\eqref{eqNNLegRec1} and~\eqref{eqNNLegRec2} through the same substitution $x = 2t/T - 1$:
\begin{gather}
  \sqrt{2i+1} \biggl( \frac{2t}{T} - 1 \biggr) \hat P(i,t) = \frac{i+1}{\sqrt{2i+3}} \, \hat P(i+1,t) + \frac{i}{\sqrt{2i-1}} \, \hat P(i-1,t), \label{eqLegRec1} \\
  \frac{T}{2} \, \sqrt{2i+1} \, \hat P(i,t) = \frac{1}{\sqrt{2i+3}} \, \hat P'(i+1,t) - \frac{1}{\sqrt{2i-1}} \, \hat P'(i-1,t). \label{eqLegRec2}
\end{gather}

From Equation~\eqref{eqLegRec2}, we have
\begin{equation}\label{eqLegRec3}
  \frac{2\sqrt{2i+1}}{T} \int_0^t {\hat P(i,\tau) d\tau} = \frac{1}{\sqrt{2i+3}} \, \hat P(i+1,t) - \frac{1}{\sqrt{2i-1}} \, \hat P(i-1,t),
\end{equation}
since according to Property~\eqref{eqNNLegProp1} the condition
\begin{equation}\label{eqLegProp1}
  \frac{1}{\sqrt{2i+3}} \, \hat P(i+1,t) = \frac{1}{\sqrt{2i-1}} \, \hat P(i- 1,t) \ \ \ \text{for} \ \ \ t = 0 \ \ \ \text{or} \ \ \ t = T,
\end{equation}
is satisfied.

Property~\eqref{eqNNLegProp2} allows one to claim that
\begin{equation}\label{eqLegProp2}
  \hat P(i,t) = (-1)^i \hat P(i,T-t), \ \ \ i = 0,1,2,\dots
\end{equation}

\section{Spectral Characteristics of Functions}\label{secSpFunc}

In this section, explicit and recurrence relations are given for the spectral characteristic of the power function relative to the Legendre polynomials, which is necessary for representing the spectral characteristic $K^H$ of the operator $\ck_H$. Further, for an arbitrary column matrix $F$, its elements are denoted by $F_i$, and $F^\trans$ means the transposed column matrix, i.e., the row matrix.

Elements of the spectral characteristic $F^\alpha$ of the function $f_\alpha(t) = t^\alpha$, $\alpha > -1/2$, relative to the Legendre polynomials~\eqref{eqDefLeg} are determined by the explicit relation
\begin{equation}\label{eqSpFAlphaExplicit}
  \begin{aligned}
    F_i^\alpha & = \bigl( q(i,\cdot),f_\alpha(\cdot) \bigr)_\LPT = \int_\mathds{T} {t^\alpha \hat P(i,t) dt} =
    \sqrt{\frac{2i+1}{T}} \int_\mathds{T} {\sum\limits_{k=0}^i {l_{ik} \, \frac{t^{\alpha+k}}{T^k}} \, dt} \\
    & = \sqrt{\frac{2i+1}{T}} \sum\limits_{k=0}^i {l_{ik} \, \frac{T^{\alpha+1}}{\alpha+k+1}}, \ \ \ i = 0,1,2,\dots
  \end{aligned}
\end{equation}

This is a compact formula, but the equivalent recurrence relation can be proposed. Calculations using such a relation require fewer algebraic operations.

\begin{Theorem}
Elements of the spectral characteristic $F^\alpha$ of the function $f_\alpha(t) = t^\alpha$, $\alpha > -1/2$, relative to the Legendre polynomials~\eqref{eqDefLeg} satisfy the recurrence relation
\begin{equation}\label{eqSpFAlphaImplicit}
  F_{i+1}^\alpha = \sqrt{\frac{2i+3}{2i+1}} \, \frac{\alpha-i}{\alpha+i+2} \, F_i^\alpha, \ \ \ F_0^\alpha = \frac{T^\alpha \sqrt{T}}{\alpha+1}, \ \ \ i = 0,1,2,\dots,
\end{equation}
or explicitly
\begin{equation}\label{eqSpFAlphaExplicitShort}
  F_i^\alpha = T^\alpha \sqrt{T} \, \frac{\sqrt{2i+1} \, \alpha (\alpha-1) \dots (\alpha-i+1)}{(\alpha+1)(\alpha+2) \ldots (\alpha+i+1)} = T^\alpha \sqrt{T} \, \frac{\sqrt{2i+1} \, \alpha^{\underline{i}}}{(\alpha + 1)^{\overline{i+1}}},
\end{equation}
where $\alpha^{\underline{i}}$ and $(\alpha + 1)^{\overline{i+1}}$ are lower and upper factorials, respectively.
\end{Theorem}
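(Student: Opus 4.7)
The plan is to first derive the explicit closed form~\eqref{eqSpFAlphaExplicitShort} and then obtain the recurrence~\eqref{eqSpFAlphaImplicit} as an immediate corollary via the ratio $F_{i+1}^\alpha / F_i^\alpha$. The base case is direct: since $\hat P(0,t) = 1/\sqrt{T}$, we get $F_0^\alpha = \frac{1}{\sqrt{T}}\int_0^T t^\alpha\,dt = T^\alpha\sqrt{T}/(\alpha+1)$, which agrees with both the initial value in~\eqref{eqSpFAlphaImplicit} and with~\eqref{eqSpFAlphaExplicitShort} evaluated at $i=0$.

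For general $i$, I would change variables $t = Tx$ and work with the shifted Legendre polynomial $\tilde P_i(x) = P_i(2x-1)$, so that
\[
F_i^\alpha = T^\alpha\sqrt{T}\,\sqrt{2i+1}\int_0^1 x^\alpha \tilde P_i(x)\,dx.
\]
The key tool is the Rodrigues representation $\tilde P_i(x) = \frac{1}{i!}\frac{d^i}{dx^i}[x^i(x-1)^i]$, combined with $i$-fold integration by parts. Because $x^i(x-1)^i$ has a zero of order $i$ at each endpoint, its derivatives of orders $0, 1, \dots, i-1$ vanish at both $x=0$ and $x=1$; together with $\alpha > -1/2 > -1$, this forces every boundary term to vanish. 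The result is
\[
\int_0^1 x^\alpha \tilde P_i(x)\,dx = \frac{(-1)^i \alpha^{\underline{i}}}{i!}\int_0^1 x^{\alpha-i}\cdot x^i (x-1)^i\,dx = \frac{\alpha^{\underline{i}}}{i!}\int_0^1 x^\alpha (1-x)^i\,dx = \frac{\alpha^{\underline{i}}}{(\alpha+1)^{\overline{i+1}}},
\]
where the final step uses the beta-integral evaluation $\int_0^1 x^\alpha(1-x)^i\,dx = i!/[(\alpha+1)(\alpha+2)\cdots(\alpha+i+1)]$. Substituting back gives~\eqref{eqSpFAlphaExplicitShort}, and the recurrence then follows by a one-line calculation from the identities $\alpha^{\underline{i+1}} = (\alpha-i)\alpha^{\underline{i}}$ and $(\alpha+1)^{\overline{i+2}} = (\alpha+i+2)(\alpha+1)^{\overline{i+1}}$.

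The argument is essentially mechanical. The only technical point that requires thought is verifying that the intermediate boundary terms in the iterated integration by parts vanish at $x=0$ when $\alpha \in (-1/2, 0)$, but this is immediate from comparing the vanishing order of $x^i(x-1)^i$ with the integrable singularity of $x^\alpha$. A more combinatorial alternative would be to sum the finite expression~\eqref{eqSpFAlphaExplicit} in closed form, but the Rodrigues--beta-integral route is considerably cleaner.
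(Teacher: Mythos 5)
Your proof is correct, and it takes a genuinely different route from the paper's. The paper never touches the Rodrigues formula: it multiplies the three-term recurrence \eqref{eqLegRec1} by $t^\alpha$, integrates over $\mathds{T}$, handles the single term $\int t^{\alpha+1}\hat P(i,t)\,dt$ by one integration by parts via \eqref{eqLegRec3}, and thereby obtains a \emph{three}-term recurrence for $F_i^\alpha$ (Equation \eqref{eqSpFAlphaImplicitAux2}); an induction then collapses this to the first-order recurrence \eqref{eqSpFAlphaImplicit}, from which the closed form \eqref{eqSpFAlphaExplicitShort} is read off. You go the other way: the Rodrigues representation $\tilde P_i(x)=\frac{1}{i!}\frac{d^i}{dx^i}[x^i(x-1)^i]$ plus $i$-fold integration by parts reduces everything to the beta integral $\int_0^1 x^\alpha(1-x)^i\,dx$, giving \eqref{eqSpFAlphaExplicitShort} directly, with \eqref{eqSpFAlphaImplicit} as a one-line corollary. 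Your route is shorter and arguably more transparent (it identifies $F_i^\alpha$ as a classical Legendre moment), at the price of checking $i$ boundary terms, including the singular endpoint $x=0$ when $\alpha\in(-1/2,0)$ --- which you handle correctly: the $(i-j)$th derivative of $x^i(x-1)^i$ vanishes to order $j$ at $0$ while the $(j-1)$th derivative of $x^\alpha$ blows up only like $x^{\alpha-j+1}$, so each boundary term is $O(x^{\alpha+1})\to 0$. The paper's route avoids any singular boundary analysis and keeps the argument inside the recurrence machinery of Section~\ref{secLegPoly}, which it reuses later (e.g., for the recurrence satisfied by $A_{ij}^\alpha$ in Section~\ref{secSpComp}); one tiny cosmetic point on your side is that the recurrence should be asserted by cross-multiplying the closed forms rather than dividing by $F_i^\alpha$, since $F_i^\alpha=0$ when $\alpha$ is a non-negative integer less than $i$.
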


\begin{proof}
Multiply the left-hand and right-hand sides of Equation~\eqref{eqLegRec1} by $t^\alpha$ and integrate over the interval $\mathds{T}$:
\begin{equation}\label{eqSpFAlphaImplicitAux1}
  \begin{aligned}
    & \sqrt{2i+1} \int_\mathds{T} \biggl( \frac{2t^{\alpha+1}}{T} - t^\alpha \biggr) \hat P(i,t) dt \\
    & \ \ \ = \frac{i+1}{\sqrt{2i+3}} \int_\mathds{T} t^\alpha \hat P(i+1,t) dt + \frac{i}{\sqrt{2i-1}} \int_\mathds{T} t^\alpha \hat P(i-1,t) dt.
  \end{aligned}
\end{equation}

In the resulting expression, it is necessary to consider in more detail only one term applying the rule for the integration by parts and taking into account Property~\eqref{eqLegProp1}:
\begin{align*}
  & \frac{2\sqrt{2i+1}}{T} \int_\mathds{T} t^{\alpha+1} \hat P(i,t) dt \\
  & \ \ \ = - (\alpha+1) \int_\mathds{T} t^\alpha \biggl( \frac{1}{\sqrt{2i+3}} \, \hat P(i+1,t) - \frac{1}{\sqrt{2i-1}} \, \hat P(i-1,t) \biggr) dt,
\end{align*}
consequently,
\begin{gather*}
  -\frac{\alpha+1}{\sqrt{2i+3}} \int_\mathds{T} t^\alpha \hat P(i+1,t) dt + \frac{\alpha+1}{\sqrt{2i-1}} \int_\mathds{T} t^\alpha \hat P(i-1,t) dt - \sqrt{2i+1} \int_\mathds{T} t^\alpha \hat P(i,t) dt \\
  = \frac{i+1}{\sqrt{2i+3}} \int_\mathds{T} t^\alpha \hat P(i+1,t) dt + \frac{i}{\sqrt{2i-1}} \int_\mathds{T} t^\alpha \hat P(i-1,t) dt,
\end{gather*}
or
\begin{gather*}
  -\frac{\alpha+i+2}{\sqrt{2i+3}} \int_\mathds{T} t^\alpha \hat P(i+1,t) dt = \sqrt{2i+1} \int_\mathds{T} t^\alpha \hat P(i,t) dt - \frac{\alpha-i+1}{\sqrt{2i-1}} \int_\mathds{T} t^\alpha \hat P(i-1,t) dt.
\end{gather*}

From here we obtain the recurrence relation
\begin{equation}\label{eqSpFAlphaImplicitAux2}
  -\frac{\alpha+i+2}{\sqrt{2i+3}} \, F_{i+1}^\alpha = \sqrt{2i+1} \, F_i^\alpha - \frac{\alpha-i+1}{\sqrt{2i-1}} \, F_{i-1}^\alpha, \ \ \ i = 1,2,\dots,
\end{equation}
for which the initial conditions are determined by Equation~\eqref{eqSpFAlphaExplicit}:
\[
  F_0^\alpha = \frac{T^\alpha \sqrt{T}}{\alpha + 1}, \ \ \ F_1^\alpha = T^\alpha \sqrt{T} \biggl( \frac{2\sqrt{3}}{\alpha + 2} - \frac{\sqrt{3}}{\alpha + 1} \biggr).
\]

Next, apply the mathematical induction to prove Equation~\eqref{eqSpFAlphaImplicit}. For $i = 0$, this statement is true, since
\[
  F_1^\alpha = T^\alpha \sqrt{T} \, \frac{\sqrt{3} \alpha}{(\alpha + 1)(\alpha + 2)} = \frac{\sqrt{3} \alpha}{\alpha + 2} \, F_0^\alpha.
\]

Let us assume that the following relation is also satisfied:
\[
  F_i^\alpha = \sqrt{\frac{2i+1}{2i-1}} \, \frac{\alpha-i+1}{\alpha+i+1} \, F_{i-1}^\alpha,
\]
then expressing and substituting $F_{i-1}^\alpha$ into Equation~\eqref{eqSpFAlphaImplicitAux2} we have
\begin{align*}
  -\frac{\alpha+i+2}{\sqrt{2i+3}} \, F_{i+1}^\alpha & = \frac{2i+1}{\sqrt{2i+1}} \, F_i^\alpha - \frac{\alpha-i+1}{\sqrt{2i-1}} \, \sqrt{\frac{2i-1}{2i+1}} \, \frac{\alpha+i+1}{\alpha-i+1} \, F_i^\alpha \\
  & = \frac{2i+1}{\sqrt{2i+1}} \, F_i^\alpha - \frac{\alpha+i+1}{\sqrt{2i+1}} \, F_i^\alpha = -\frac{\alpha-i}{\sqrt{2i+1}} \, F_i^\alpha,
\end{align*}
i.e., we obtain Equation~\eqref{eqSpFAlphaImplicit}. Equation~\eqref{eqSpFAlphaExplicitShort} is the obvious consequence of Equation~\eqref{eqSpFAlphaImplicit}.
\end{proof}

Equation~\eqref{eqSpFAlphaImplicit} implies the result:
\[
  F_0^\alpha = \frac{T^\alpha \sqrt{T}}{\alpha+1}, \ \ \ F_1^\alpha = T^\alpha \sqrt{T} \, \frac{\sqrt{3} \alpha}{(\alpha+1)(\alpha+2)}, \ \ \ F_2^\alpha = T^\alpha \sqrt{T} \, \frac{\sqrt{5} \alpha (\alpha-1)}{(\alpha+1)(\alpha+2)(\alpha+3)}, \ \ \ \dots,
\]
i.e.,
\[
  F^\alpha = T^\alpha \sqrt{T} \cdot \biggl[ \, \frac{1}{\alpha+1} ~~ \frac{\sqrt{3} \alpha}{(\alpha+1)(\alpha+2)} ~~ \frac{\sqrt{5} \alpha (\alpha-1)}{(\alpha+1)(\alpha+2)(\alpha+3)} ~~ \dots ~ \biggr]^\trans.
\]

\begin{Remark}
If $\alpha = n$ is a non-negative integer, then it is useful to apply the property $F_i^n = 0$ for $i = n+1,n+2,\dots$ This fact has a simple explanation: the monomial $f_n(t) = t^n$ is represented as a linear combination of the Legendre polynomials of degree no greater than $n$. Thus,
\begin{gather*}
  F^0 = \mathbf{1} = \sqrt{T} \cdot [ \, 1 ~~ 0 ~~ 0 ~~ \dots ~ ]^\trans, \\
  F^1 = F = T \sqrt{T} \cdot \biggl[ \, \frac{1}{2} ~~ \frac{\sqrt{3}}{6} ~~ 0 ~~ 0 ~~ 0 ~~ \dots ~ \biggr]^\trans, \ \ \
  F^2 = T^2 \sqrt{T} \cdot \biggl[ \, \frac{1}{3} ~~ \frac{\sqrt{3}}{6} ~~ \frac{\sqrt{5}}{30} ~~ 0 ~~ 0 ~~ \dots ~ \biggr]^\trans, \ \ \ \dots
\end{gather*}
\end{Remark}

\begin{Remark}
Another recurrence relation can be obtained for elements of the spectral characteristic $F^\alpha$ of the function $f_\alpha(\cdot)$ relative to the Legendre polynomials. It connects elements with different $\alpha$.

By substitution $\alpha+1$ instead of $\alpha$ in Equation~\eqref{eqSpFAlphaExplicitShort} we have
\[
  F_i^{\alpha+1} = T^{\alpha+1} \sqrt{T} \, \frac{\sqrt{2i+1} \, (\alpha+1) \alpha (\alpha-1) \dots (\alpha-i+2)}{(\alpha+2)(\alpha+3) \ldots (\alpha+i+2)},
\]
i.e.,
\begin{align*}
  F_i^{\alpha+1} & = T^\alpha \sqrt{T} \, \frac{\sqrt{2i+1} \, \alpha (\alpha-1) \dots (\alpha-i+2)(\alpha-i+1)}{(\alpha+1)(\alpha+2)(\alpha+3) \ldots (\alpha+i+1)} \, \frac{T (\alpha+1)^2}{(\alpha-i+1)(\alpha+i+2)} \\
  & = \frac{T (\alpha+1)^2}{(\alpha-i+1)(\alpha+i+2)} \, F_i^\alpha.
\end{align*}

In general,
\begin{equation}\label{eqSpFAlphaImplicitAlpha}
  \begin{aligned}
    F_i^{\alpha+k} & = \frac{T^k (\alpha+1)^2 \dots (\alpha+k)^2}{(\alpha-i+1) \dots (\alpha-i+k) (\alpha+i+2) \dots (\alpha+i+k+1)} \, F_i^\alpha \\
    & = \frac{T^k [(\alpha+1)^{\overline{k}}]^2}{(\alpha-i+1)^{\overline{k}}(\alpha+i+2)^{\overline{k}}} \, F_i^\alpha,
  \end{aligned}
\end{equation}
where $(\alpha+1)^{\overline{k}}$, $(\alpha-i+1)^{\overline{k}}$, and $(\alpha+i+2)^{\overline {k}}$ are upper factorials.
\end{Remark}

\section{Spectral Characteristics of Operators}\label{secSpOper}

Here, we obtain explicit relations for spectral characteristics of both the multiplication operator with power function as a multiplier and the fractional integration operator relative to the Legendre polynomials. Using them, we can express the spectral characteristic $K^H$ of the operator $\ck_H$ by Equation~\eqref{eqSpFBMFiniteOper}. In the following, for an arbitrary matrix $A$ its elements are denoted by $A_{ij}$, for rows and columns, notations $A_{i*}$ and $A_{*j}$ are used ($A_{i*}$ is the $i$th row and $A_{*j}$ is the $j$th column), and $A^\trans$ is the transposed matrix.

\begin{Proposition}\label{propSpAOperAlphaExplicit}
Let $A^\alpha$ be the spectral characteristic of the multiplication operator $\ca_{f_\alpha}$ with multiplier $f_\alpha(t) = t^\alpha$, $\alpha > -1/2$, relative to the Legendre polynomials~\eqref{eqDefLeg}. Then
\begin{equation}\label{eqSpAOperAlphaExplicit}
  A_{*j}^\alpha = \sqrt{\frac{2j+1}{T}} \sum\limits_{k=0}^j \frac{l_{jk}}{T^k} \, F^{\alpha+k}, \ \ \ j = 0,1,2,\dots,
\end{equation}
where $F^{\alpha+k}$ is the spectral characteristic of the function $f_{\alpha+k}(\cdot)$, $k = 0,1,\dots,j$.
\end{Proposition}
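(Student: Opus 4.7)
The plan is to compute $A_{ij}^\alpha$ directly from Definition~\ref{defSpOperT} and the explicit polynomial form~\eqref{eqDefLeg} of $\hat P(j,\cdot)$, and then recognize the resulting integrals as elements of the spectral characteristics $F^{\alpha+k}$.

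First, I would write
\[
  A_{ij}^\alpha = \bigl( \hat P(i,\cdot), \ca_{f_\alpha} \hat P(j,\cdot) \bigr)_\LPT = \int_\mathds{T} t^\alpha \, \hat P(i,t) \hat P(j,t) \, dt,
\]
using Equation~\eqref{eqDefOperA} for the action of $\ca_{f_\alpha}$. Next, I would expand $\hat P(j,t)$ by the explicit formula~\eqref{eqDefLeg} (with coefficients $l_{jk}$ from~\eqref{eqDefLegCoef}) and pull the finite sum and the $j$-dependent constants outside the integral. This yields
\[
  A_{ij}^\alpha = \sqrt{\frac{2j+1}{T}} \sum_{k=0}^j \frac{l_{jk}}{T^k} \int_\mathds{T} t^{\alpha+k} \hat P(i,t) \, dt.
\]

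The integral on the right is precisely the $i$th element of the spectral characteristic of $f_{\alpha+k}(\cdot)$ as given by Equation~\eqref{eqSpFAlphaExplicit}, namely $F_i^{\alpha+k}$. Since $\alpha > -1/2$ and $k \geqslant 0$, the condition $\alpha+k > -1/2$ holds, so $F^{\alpha+k}$ is well defined. Collecting entries over all $i$ gives the $j$th column of $A^\alpha$, which is exactly~\eqref{eqSpAOperAlphaExplicit}.

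There is no real obstacle in this argument: it is essentially a linearity-plus-Fubini manipulation, since the sum over $k$ is finite and the integrand is absolutely integrable on $\mathds{T}$. The only point worth flagging is the correct ordering of indices (the expansion is in the second argument $\hat P(j,\cdot)$ of the inner product, which is why $l_{jk}$ and the prefactor $\sqrt{(2j+1)/T}$ appear, rather than their counterparts with index $i$), and the verification that each $F^{\alpha+k}$ falls within the admissible range of the parameter from the preceding section.
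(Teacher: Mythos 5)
Your proposal is correct and follows exactly the same route as the paper's own proof: expand $\hat P(j,\cdot)$ via Equation~\eqref{eqDefLeg}, interchange the finite sum with the integral, and identify each resulting integral as $F_i^{\alpha+k}$ from Equation~\eqref{eqSpFAlphaExplicit}. Your added remarks on the index ordering and on $\alpha+k>-1/2$ are sensible but not points the paper needed to belabor.
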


\begin{proof}
Elements of the spectral characteristic $A^\alpha$ are given as follows:
\begin{align*}
  A_{ij}^\alpha & = \bigl( q(i,\cdot),f_\alpha(\cdot) q(j,\cdot) \bigr)_\LPT = \int_\mathds{T} {t^\alpha \hat P(i,t) \hat P(j,t) dt} \\
  & = \sqrt{\frac{2j+1}{T}} \int_\mathds{T} {\hat P(i,t) \sum\limits_{k=0}^j {l_{jk} \, \frac{t^{\alpha+k}}{T^k}} \, dt} = \sqrt{\frac{2j+1}{T}} \sum\limits_{k=0}^j \frac{l_{jk}}{T^k} \int_\mathds{T} {\hat P(i,t) t^{\alpha+k} dt} \\
  & = \sqrt{\frac{2j+1}{T}} \sum\limits_{k=0}^j \frac{l_{jk}}{T^k} \bigl( q(i,\cdot),f_{\alpha+k}(\cdot) \bigr)_\LPT = \sqrt{\frac{2j+1}{T}} \sum\limits_{k=0}^j \frac{l_{jk}}{T^k} \, F_i^{\alpha+k}, \ \ \ i,j = 0,1,2,\dots,
\end{align*}
where $F_i^{\alpha+k}$ are elements of the spectral characteristic $F^{\alpha+k}$. This implies Equation~\eqref{eqSpAOperAlphaExplicit}.
\end{proof}

\begin{Remark}\label{remSpAOper1}
The spectral characteristic of the multiplication operator with an arbitrary admissible multiplier relative to the orthonormal basis $\BasisT$ is the symmetric matrix~\cite{SolSemPeshNed_79}, e.g., $A^\alpha = [A^\alpha]^\trans$. When choosing the Legendre polynomials, we obtain
\[
  A_{i*}^\alpha = \sqrt{\frac{2i+1}{T}} \sum\limits_{k=0}^i \frac{l_{ik}}{T^k} \, [F^{\alpha+k}]^\trans, \ \ \ i = 0,1,2,\dots,
\]
where $F^{\alpha+k}$ is the spectral characteristic of the function $f_{\alpha+k}(\cdot)$, $k = 0,1,\dots,i$.

Thus, the $j$th column of the spectral characteristic $A^\alpha$ is a linear combination of spectral characteristics $F^{\alpha+k}$, and the $i$th row of the spectral characteristic $A^\alpha$ is a linear combination of transposed spectral characteristics $F^{\alpha+k}$.
\end{Remark}

\begin{Proposition}\label{propSpMOper}
Let $M$ be the spectral characteristic of the inversion operator $\cm$ relative to the Legendre polynomials~\eqref{eqDefLeg}. Then
\begin{equation}\label{eqSpMOper}
  M_{*j} = (-1)^j E_{*j}, \ \ \ j = 0,1,2,\dots,
\end{equation}
where $E_{*j}$ is the $j$th column of the infinite identity matrix $E$.
\end{Proposition}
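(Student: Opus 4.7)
The plan is to compute the entries $M_{ij}$ directly from Definition~\ref{defSpOperT} and reduce the integral to the orthonormality relation for the Legendre polynomials $\{\hat P(i,\cdot)\}_{i=0}^\infty$. By definition of the inversion operator in Equation~\eqref{eqDefOperM} and the spectral characteristic of a linear operator, I would write
\[
  M_{ij} = \bigl( q(i,\cdot),\cm q(j,\cdot) \bigr)_\LPT = \int_0^T \hat P(i,t) \, \hat P(j,T-t) \, dt, \ \ \ i,j = 0,1,2,\dots
\]

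Next I would invoke Property~\eqref{eqLegProp2}, which (after the trivial substitution $t \mapsto T-t$) yields $\hat P(j,T-t) = (-1)^j \hat P(j,t)$. Pulling the sign out of the integral gives
\[
  M_{ij} = (-1)^j \int_0^T \hat P(i,t) \, \hat P(j,t) \, dt = (-1)^j \delta_{ij},
\]
where the last equality uses the fact that $\{\hat P(i,\cdot)\}_{i = 0}^\infty$ is an orthonormal system in $\LPT$. Thus $M$ is a diagonal matrix whose $j$th diagonal entry is $(-1)^j$, which is precisely the statement $M_{*j} = (-1)^j E_{*j}$.

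There is no real obstacle in this argument: the entire proof is a one-line application of Property~\eqref{eqLegProp2} followed by orthonormality. The only thing to be careful about is the direction of the symmetry relation in Equation~\eqref{eqLegProp2}, which is stated as $\hat P(i,t) = (-1)^i \hat P(i,T-t)$ and therefore equivalently (via $t \mapsto T-t$) as $\hat P(i,T-t) = (-1)^i \hat P(i,t)$; either form works, but the second is the one that fits directly inside the integral defining $M_{ij}$.
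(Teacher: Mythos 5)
Your proof is correct and follows exactly the same route as the paper: compute $M_{ij}$ from Definition~\ref{defSpOperT}, apply the symmetry Property~\eqref{eqLegProp2} to replace $\hat P(j,T-t)$ by $(-1)^j \hat P(j,t)$, and conclude by orthonormality that $M_{ij} = (-1)^j \delta_{ij}$. Nothing is missing.
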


\begin{proof}
Elements of the spectral characteristic $M$ are given as follows:
\[
  M_{ij} = \bigl( q(i,\cdot),\cm q(j,\cdot) \bigr)_\LPT = \int_\mathds{T} {\hat P(i,t) \cm \hat P(j,t) dt} = \int_\mathds{T} {\hat P(i,t) \hat P(j,T-t) dt}.
\]

According to Property~\eqref{eqLegProp2}, we have
\begin{align*}
  M_{ij} = (-1)^j \int_\mathds{T} {\hat P(i,t) \hat P(j,t) dt} = (-1)^j \delta_{ij},
\end{align*}
where $\delta_{ij}$ is the Kronecker delta, and it follows from the orthonormality of the Legendre polynomials~\eqref{eqDefLeg}. This implies Equation~\eqref{eqSpMOper}.
\end{proof}

\begin{Proposition}\label{propSpOperIFracLExplicit}
Let $P^{-\beta}$ be the spectral characteristic of the integration operator $\cj_{0+}^\beta$ of fractional order $\beta > 0$ relative to the Legendre polynomials~\eqref{eqDefLeg}. Then
\begin{equation}\label{eqSpOperIFracLExplicit}
  P_{*j}^{-\beta} = \frac{1}{\Gamma(\beta+1)} \sqrt{\frac{2j+1}{T}} \sum\limits_{k=0}^j \frac{l_{jk} k!}{T^k (\beta+1)^{\overline{k}}} \, F^{\beta+k}, \ \ \ j = 0,1,2,\dots,
\end{equation}
where $F^{\beta+k}$ is the spectral characteristic of the function $f_{\beta+k}(\cdot)$, $(\beta+1)^{\overline{k}}$ is the upper factorial, $k = 0,1,\dots,j$.
\end{Proposition}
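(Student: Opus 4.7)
The plan is to compute the matrix entries $P_{ij}^{-\beta}$ directly from their definition as inner products, reducing everything to spectral characteristic elements $F_i^{\beta+k}$ of the power function. The key is to apply $\cj_{0+}^\beta$ to each basis element $\hat{P}(j,\cdot)$ termwise using the explicit polynomial expansion~\eqref{eqDefLeg}.

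First, I would recall the standard fractional integration formula for monomials, namely
\[
  \cj_{0+}^\beta t^k = \frac{1}{\Gamma(\beta)} \int_0^t (t-\tau)^{\beta-1} \tau^k \, d\tau = \frac{k!}{\Gamma(\beta+k+1)} \, t^{\beta+k}, \qquad k = 0,1,2,\dots,
\]
which follows by the change of variables $\tau = ts$ and the beta function identity. Factoring $\Gamma(\beta+k+1) = \Gamma(\beta+1) \, (\beta+1)^{\overline{k}}$ rewrites the denominator in the form appearing in~\eqref{eqSpOperIFracLExplicit}.

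Next, I would substitute the explicit polynomial representation~\eqref{eqDefLeg} of $\hat{P}(j,\cdot)$ into $\cj_{0+}^\beta \hat{P}(j,\cdot)$, interchanging the finite sum with the integral operator, to obtain
\[
  \cj_{0+}^\beta \hat{P}(j,t) = \frac{1}{\Gamma(\beta+1)} \sqrt{\frac{2j+1}{T}} \sum_{k=0}^j \frac{l_{jk} \, k!}{T^k \, (\beta+1)^{\overline{k}}} \, t^{\beta+k}.
\]
Then by Definition~\ref{defSpOperT},
\[
  P_{ij}^{-\beta} = \bigl( q(i,\cdot), \cj_{0+}^\beta q(j,\cdot) \bigr)_\LPT = \frac{1}{\Gamma(\beta+1)} \sqrt{\frac{2j+1}{T}} \sum_{k=0}^j \frac{l_{jk} \, k!}{T^k \, (\beta+1)^{\overline{k}}} \int_\mathds{T} \hat{P}(i,t) \, t^{\beta+k} \, dt.
\]
Recognizing the remaining integral as $F_i^{\beta+k} = (q(i,\cdot), f_{\beta+k}(\cdot))_\LPT$ and collecting the resulting identities for $i = 0,1,2,\dots$ into a single column relation yields Equation~\eqref{eqSpOperIFracLExplicit}.

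There is no real obstacle: the calculation is essentially parallel to the proof of Proposition~\ref{propSpAOperAlphaExplicit}, with the only substantive new ingredient being the monomial fractional integration formula. The condition $\beta > 0$ ensures the integral defining $\cj_{0+}^\beta$ converges, and the sums are finite so interchange with the operator is automatic. The resulting formula is compatible with the special case $\beta = 1$ (ordinary integration), where $(\beta+1)^{\overline{k}} = (k+1)!$ and the factor $k!/(k+1)! = 1/(k+1)$ reproduces the known spectral characteristic of $\cj$ relative to the Legendre polynomials.
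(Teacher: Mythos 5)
Your proposal is correct and follows essentially the same route as the paper: expand $\hat P(j,\cdot)$ via Equation~\eqref{eqDefLeg}, apply the monomial fractional integration formula $\cj_{0+}^\beta t^k = \frac{\Gamma(k+1)}{\Gamma(\beta+k+1)}\,t^{\beta+k}$ termwise, and identify the remaining inner products as elements $F_i^{\beta+k}$. The only cosmetic difference is your added consistency check at $\beta = 1$, which the paper does not include.
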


\begin{proof}
Elements of the spectral characteristic $P^{-\beta}$ are given as follows:
\begin{align*}
  P_{ij}^{-\beta} & = \bigl( q(i,\cdot),\cj_{0+}^\beta q(j,\cdot) \bigr)_\LPT = \int_\mathds{T} {\hat P(i,t) \cj_{0+}^\beta \hat P(j,t) dt} \\
  & = \sqrt{\frac{2j+1}{T}} \int_\mathds{T} {\hat P(i,t) \sum\limits_{k=0}^j {l_{jk} \, \frac{\cj_{0+}^\beta t^k}{T^k}} \, dt}.
\end{align*}

It is known~\cite{SamKilMar_87} that
\begin{equation}\label{eqFracIntegralForPowerFunc}
  \cj_{0+}^\beta t^\alpha = \frac{1}{\Gamma(\beta)} \int_0^t {\frac{\tau^\alpha d\tau}{(t-\tau)^{1-\beta}}} = \frac{\Gamma(\alpha+1)}{\Gamma(\alpha+\beta+1)} \, t^{\alpha+\beta},
\end{equation}
consequently,
\begin{align*}
  P_{ij}^{-\beta} & = \sqrt{\frac{2j+1}{T}} \sum\limits_{k=0}^j \frac{l_{jk}}{T^k} \, \frac{\Gamma(k+1)}{\Gamma(\beta+k+1)} \int_\mathds{T} {\hat P(i,t) t^{\beta+k} dt} \\
  & = \frac{1}{\Gamma(\beta+1)} \sqrt{\frac{2j+1}{T}} \sum\limits_{k=0}^j \frac{l_{jk}}{T^k} \, \frac{k!}{(\beta+1) \ldots (\beta+k)} \bigl( q(i,\cdot),f_{\beta+k}(\cdot) \bigr)_\LPT \\
  & = \frac{1}{\Gamma(\beta+1)} \sqrt{\frac{2j+1}{T}} \sum\limits_{k=0}^j \frac{l_{jk} k!}{T^k (\beta+1)^{\overline{k}}} \, F_i^{\beta+k}, \ \ \ i,j = 0,1,2,\dots,
\end{align*}
where $F_i^{\beta+k}$ are elements of the spectral characteristic $F^{\beta+k}$. This implies Equation~\eqref{eqSpOperIFracLExplicit}.
\end{proof}

\begin{Remark}\label{remSpAOper2}
Spectral characteristics $A^{1/2-H}$ and $A^{H-1/2}$ of multiplication operators with multipliers $f_{1/2-H}(t) = t^{1/2-H}$ and $f_{H-1/2}(t) = t^{H-1/2}$, respectively, are mutually inverse matrices, since $f_{1/2-H}(t) f_{H-1/2}(t) = 1$. Therefore, expressions
\[
  A^{1/2-H} P^{-(1/2-H)} A^{H-1/2} \ \ \ \text{and} \ \ \ A^{H-1/2} P^{-(H-1/2)} A^{1/2-H}
\]
from Equation~\eqref{eqSpFBMFiniteOper} define similar matrices for $P^{-(1/2-H)}$ and $P^{-(H-1/2)}$, respectively.
\end{Remark}

\begin{Proposition}\label{propSpOperIFracRExplicit}
Let $P^{-\beta}$ and $D^{-\beta}$ be spectral characteristics of integration operators $\cj_{0+}^\beta$ and $\cj_{T-}^\beta$ of fractional order $\beta > 0$ relative to the orthonormal basis $\BasisT$ of $\LPT$. Then
\begin{equation}\label{eqSpOperIFracRExplicit}
  D^{-\beta} = [P^{-\beta}]^\trans.
\end{equation}
\end{Proposition}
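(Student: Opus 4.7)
The plan is to prove this proposition by a direct integration-by-parts (adjoint) argument. The key observation is that the operators $\cj_{0+}^\beta$ and $\cj_{T-}^\beta$ are formal adjoints of each other on $\LPT$, which is the classical fractional integration by parts formula; this adjoint relationship, once expressed at the level of basis elements $q(i,\cdot)$ and $q(j,\cdot)$, is exactly the statement that the associated matrices are transposes.

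Concretely, I would start from Definition~\ref{defSpOperT} and write
\[
  D_{ij}^{-\beta} = \bigl( q(i,\cdot), \cj_{T-}^\beta q(j,\cdot) \bigr)_\LPT = \frac{1}{\Gamma(\beta)} \int_0^T q(i,t) \int_t^T \frac{q(j,\tau)}{(\tau-t)^{1-\beta}} \, d\tau \, dt.
\]
The next step is to recognize that the domain of integration is the triangle $\{(t,\tau) : 0 \leqslant t < \tau \leqslant T\}$, and then swap the order of integration by Fubini's theorem. After swapping, the inner integral in the variable $t$ runs over $[0,\tau]$ and produces precisely $(\cj_{0+}^\beta q(i,\cdot))(\tau)$ from Equation~\eqref{eqDefOperIFracL}. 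This yields
\[
  D_{ij}^{-\beta} = \int_0^T q(j,\tau) \, (\cj_{0+}^\beta q(i,\cdot))(\tau) \, d\tau = \bigl( q(j,\cdot), \cj_{0+}^\beta q(i,\cdot) \bigr)_\LPT = P_{ji}^{-\beta},
\]
which is exactly Equation~\eqref{eqSpOperIFracRExplicit} written elementwise.

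The only delicate point is justifying the application of Fubini's theorem. For this I would note that the Legendre polynomials $\hat P(i,\cdot)$ and $\hat P(j,\cdot)$ are bounded on $\mathds{T}$, so it suffices to check that $(\tau-t)^{\beta-1}$ is integrable over the triangle; this is elementary since $\int_0^T\!\int_t^T (\tau-t)^{\beta-1} d\tau\, dt = T^{\beta+1}/(\beta(\beta+1)) < \infty$ for $\beta > 0$. I consider this the main (and only) obstacle; after that, the identification of the rearranged inner integral as $\cj_{0+}^\beta q(i,\cdot)$ is immediate from the definition~\eqref{eqDefOperIFracL}.

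As a sanity check, an alternative route would use the commutation identity $\cj_{T-}^\beta = \cm \circ \cj_{0+}^\beta \circ \cm$, which via Proposition~\ref{propSpMOper} and Property~\eqref{eqSpOper2} gives $D^{-\beta} = M P^{-\beta} M$, i.e., $D_{ij}^{-\beta} = (-1)^{i+j} P_{ij}^{-\beta}$; combined with a parity argument for $P^{-\beta}$ this again yields the transpose identity. I would mention this only as a remark, since the adjoint route above is more direct and does not require auxiliary parity computations.
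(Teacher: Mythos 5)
Your argument is correct and follows essentially the same route as the paper: the paper's proof also reduces the claim to the fractional integration by parts identity $\bigl( q(i,\cdot),\cj_{T-}^\beta q(j,\cdot) \bigr)_\LPT = \bigl( q(j,\cdot),\cj_{0+}^\beta q(i,\cdot) \bigr)_\LPT$, citing it from the literature, whereas you additionally justify it by Fubini's theorem on the triangle $\{0 \leqslant t < \tau \leqslant T\}$ with the correct integrability check. Your closing remark about the alternative route via $\cm$ is in fact the mechanism the paper uses later, in the proof of Proposition~\ref{propSpOperIFracLR}, to derive the parity structure of $P^{-\beta}$.
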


\begin{proof}
Elements of the spectral characteristic $D^{-\beta}$ are given as follows:
\[
  D_{ij}^{-\beta} = \bigl( q(i,\cdot),\cj_{T-}^\beta q(j,\cdot) \bigr)_\LPT = \int_\mathds{T} {q(i,t) \cj_{T-}^\beta q(j,t) dt}.
\]

Next, using the rule for the fractional integration by parts~\cite{SamKilMar_87} we have
\[
  D_{ij}^{-\beta} = \int_\mathds{T} {q(j,t) \cj_{0+}^\beta q(i,t) dt} = \bigl( q(j,\cdot),\cj_{0+}^\beta q(i,\cdot) \bigr)_\LPT = P_{ji}^{-\beta},
\]
where $P_{ji}^{-\beta}$ are elements of the spectral characteristic $P^{-\beta}$. This implies Equation~\eqref{eqSpOperIFracRExplicit}.
\end{proof}

\begin{Remark}\label{propSpOperIFrac}
The above notations are related to the classical control theory~\cite{SolSemPeshNed_79}, in which differentiation and integration units correspond to operators $p$ and $1/p$ ($p = d/dt$). In the spectral form of mathematical description of control systems, they correspond to spectral characteristics $P$ and $P^{-1}$ if the initial conditions are additionally specified, and to spectral characteristics $D$ and $D^{-1}$ if the final conditions are additionally specified. This paper uses fractional integration operators $\cj_{0+}^\beta$ and $\cj_{T-}^\beta$, which can be considered as fractional integro-differentiation operators of order $\beta \in \mathds{R}$ with spectral characteristics $P^{-\beta}$ and $D^{-\beta}$.

Spectral characteristics $P^{-\beta}$ and $D^{-\beta}$ of fractional integration operators satisfy Equation~\eqref{eqSpOperIFracRExplicit}. This is a consequence of the fact that $\cj_{0+}^\beta$ and $\cj_{T-}^\beta$ are integral operators with kernels
\[
  k_{0+}(t,\tau) = \frac{1}{\Gamma(\beta)} \, \frac{1(t-\tau)}{(t-\tau)^{1-\beta}} \ \ \ \text{and} \ \ \ k_{T-}(t,\tau) = \frac{1}{\Gamma(\beta)} \, \frac{1(\tau-t)}{(\tau-t)^{1-\beta}},
\]
i.e., $k_{0+}(t,\tau) = k_{T-}(\tau,t)$ on the square $[0,T]^2$.

When choosing the Legendre polynomials, we have
\[
  D_{i*}^{-\beta} = \frac{1}{\Gamma(\beta+1)} \sqrt{\frac{2i+1}{T}} \sum\limits_{k=0}^i \frac{l_{jk} k!}{T^k (\beta+1)^{\overline{k}}} \, [F^{\beta+k}]^\trans, \ \ \ i = 0,1,2,\dots,
\]
where $F^{\beta+k}$ is the spectral characteristic of the function $f_{\beta+k}(\cdot)$, $k = 0,1,\dots,i$.

Thus, the $j$th column of the spectral characteristic $P^{-\beta}$ is a linear combination of spectral characteristics $F^{\beta+k}$, and the $i$th row of the spectral characteristic $D^{-\beta }$ is a linear combination of transposed spectral characteristics $F^{\beta+k}$.
\end{Remark}

\begin{Proposition}\label{propSpOperIFracLR}
Let $P^{-\beta}$ and $D^{-\beta}$ be spectral characteristics of integration operators $\cj_{0+}^\beta$ and $\cj_{T-}^\beta$ of fractional order $\beta > 0$ relative to the Legendre polynomials~\eqref{eqDefLeg}. Then we have
\begin{equation}\label{eqSpOperIFracLR}
  P^{-\beta} = P_s^{-\beta} + P_a^{-\beta}, \ \ \ D^{-\beta} = P_s^{-\beta} - P_a^{-\beta},
\end{equation}
where $P_s^{-\beta} = (P^{-\beta} + [P^{-\beta}]^\trans)/2$ is the symmetric matrix with elements
\[
  (P_s^{-\beta})_{ij} = \left\{ \begin{array}{ll}
    P_{ij}^{-\beta} & \text{for even $i+j$} \\
    0 & \text{for odd $i+j$},
  \end{array} \right.
\]
and $P_a^{-\beta} = (P^{-\beta} - [P^{-\beta}]^\trans)/2$ is the skew-symmetric matrix with elements
\[
  (P_a^{-\beta})_{ij} = \left\{ \begin{array}{ll}
    0 & \text{for even $i+j$} \\
    P_{ij}^{-\beta} & \text{for odd $i+j$},
  \end{array} \right.
\]
where $i,j = 0,1,2,\dots$
\end{Proposition}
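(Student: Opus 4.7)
The plan is to reduce the statement to a single parity identity, namely
\[
  P_{ij}^{-\beta} = (-1)^{i+j} P_{ji}^{-\beta}, \ \ \ i,j = 0,1,2,\dots,
\]
from which both equations in~\eqref{eqSpOperIFracLR} follow by the standard decomposition of a matrix into its symmetric and skew-symmetric parts.

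First I would establish this parity identity. There are two natural routes. The operator-theoretic route is to verify the relation $\cj_{T-}^\beta = \cm \circ \cj_{0+}^\beta \circ \cm$ by a direct change of variables in the fractional integral, and then translate it via Property~\eqref{eqSpOper2} into $D^{-\beta} = M P^{-\beta} M$. Since $M$ is the diagonal matrix with entries $(-1)^i$ (Proposition~\ref{propSpMOper}) and $D^{-\beta} = [P^{-\beta}]^\trans$ (Proposition~\ref{propSpOperIFracRExplicit}), comparing entries yields $P_{ji}^{-\beta} = (-1)^{i+j} P_{ij}^{-\beta}$. The direct route is to start from
\[
  P_{ij}^{-\beta} = \frac{1}{\Gamma(\beta)} \int_0^T \hat P(i,t) \int_0^t (t-\tau)^{\beta-1} \hat P(j,\tau) \, d\tau \, dt,
\]
substitute $t \mapsto T-t$, $\tau \mapsto T-\tau$, use Property~\eqref{eqLegProp2} to pick up the sign $(-1)^{i+j}$, and recognise the resulting integral as $D_{ij}^{-\beta} = P_{ji}^{-\beta}$.

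Given the parity identity, the conclusion is essentially bookkeeping. The entries of the symmetric part $P_s^{-\beta} = (P^{-\beta} + [P^{-\beta}]^\trans)/2$ are
\[
  (P_s^{-\beta})_{ij} = \tfrac{1}{2}\bigl( P_{ij}^{-\beta} + P_{ji}^{-\beta} \bigr) = \tfrac{1}{2}\bigl(1 + (-1)^{i+j}\bigr) P_{ij}^{-\beta},
\]
which equals $P_{ij}^{-\beta}$ for even $i+j$ and vanishes for odd $i+j$. The analogous computation for $P_a^{-\beta} = (P^{-\beta} - [P^{-\beta}]^\trans)/2$ gives the stated expression for its entries, and trivially $P^{-\beta} = P_s^{-\beta} + P_a^{-\beta}$. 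Finally, using Proposition~\ref{propSpOperIFracRExplicit},
\[
  D^{-\beta} = [P^{-\beta}]^\trans = [P_s^{-\beta}]^\trans + [P_a^{-\beta}]^\trans = P_s^{-\beta} - P_a^{-\beta},
\]
which is the second equality in~\eqref{eqSpOperIFracLR}.

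The only genuine content is the parity identity, so that is also the main obstacle; everything else is the symmetric/skew-symmetric splitting. I would pick the operator-theoretic route for a cleaner write-up, since the relation $\cj_{T-}^\beta = \cm \circ \cj_{0+}^\beta \circ \cm$ is a one-line change of variables and the translation into matrix form reuses results already proved in Propositions~\ref{propSpMOper} and~\ref{propSpOperIFracRExplicit}.
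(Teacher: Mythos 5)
Your proposal is correct and follows essentially the same path as the paper: the reduction to the parity identity $P_{ij}^{-\beta} = (-1)^{i+j} P_{ji}^{-\beta}$ followed by the symmetric/skew-symmetric bookkeeping is exactly the paper's argument. Your ``direct route'' (change of variables plus Property~\eqref{eqLegProp2}) is in substance the paper's main proof, which phrases it via $\cj_{0+}^\beta = \cm \circ \cj_{T-}^\beta \circ \cm$ and the self-adjointness of $\cm$, while your preferred operator-theoretic route $D^{-\beta} = M P^{-\beta} M$ is precisely the alternative proof the paper records in the remark immediately after the proposition.
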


\begin{proof}
Any matrix can be represented as the sum of symmetric and skew-symmetric matrices~\cite{HorJohn_13}, namely
\[
  P^{-\beta} = P_s^{-\beta} + P_a^{-\beta}, \ \ \ P_s^{-\beta} = (P^{-\beta} + [P^{-\beta}]^\trans)/2, \ \ \ P_a^{-\beta} = (P^{-\beta} - [P^{-\beta}]^\trans)/2.
\]
In addition, from Equation~\eqref{eqSpOperIFracRExplicit} we obtain
\[
  D^{-\beta} = [P^{-\beta}]^\trans = [P_s^{-\beta} + P_a^{-\beta}]^\trans = [P_s^{-\beta}]^\trans + [P_a^{-\beta}]^\trans = P_s^{-\beta} - P_a^{-\beta},
\]
therefore, it remains only to prove relations for elements of these matrices.

Elements of the spectral characteristic $P^{-\beta}$ are given as follows:
\[
  P_{ij}^{-\beta} = \bigl( q(i,\cdot),\cj_{0+}^\beta q(j,\cdot) \bigr)_\LPT = \int_\mathds{T} {\hat P(i,t) \cj_{0+}^\beta \hat P(j,t) dt}.
\]

Next, use the relation between fractional integration operators~\cite{SamKilMar_87}:
\begin{equation}\label{eqOperIFracLRConnection}
  \cj_{0+}^\beta = \cm \circ \cj_{T-}^\beta \circ \cm,
\end{equation}
where $\cm$ is the inversion operator, and also take into account that $\cm$ is a self-adjoint operator:
\begin{align*}
  P_{ij}^{-\beta} & = \int_\mathds{T} {\hat P(i,t) (\cm \circ \cj_{T-}^\beta \circ \cm) \hat P(j,t) dt} \\
  & = \int_\mathds{T} {\cm \hat P(i,t) \cj_{T-}^\beta \bigl( \cm \hat P(j,t) \bigr) dt} = \int_\mathds{T} {\hat P(i,T-t) \cj_{T-}^\beta \hat P(j,T-t) dt}.
\end{align*}

Finally, apply Property~\eqref{eqLegProp2}:
\begin{align*}
  P_{ij}^{-\beta} & = (-1)^i (-1)^j \int_\mathds{T} {\hat P(i,t) \cj_{T-}^\beta \hat P(j,t) dt} \\
  & = (-1)^{i+j} \bigl( q(i,\cdot),\cj_{T-}^\beta q(j,\cdot) \bigr)_\LPT = (-1)^{i+j} D_{ij}^{-\beta},
\end{align*}
where $D_{ij}^{-\beta}$ are elements of the spectral characteristic $D^{-\beta}$.

According to Proposition~\ref{propSpOperIFracRExplicit} and Equation~\eqref{eqSpOperIFracRExplicit}, we have
\[
  P_{ij}^{-\beta} = (-1)^{i+j} P_{ji}^{-\beta},
\]
hence
\begin{align*}
  (P_s^{-\beta})_{ij} & = (P_{ij}^{-\beta} + P_{ji}^{-\beta})/2 = \left\{
    \begin{array}{ll}
      P_{ij}^{-\beta} & \text{for even $i+j$} \\
      0 & \text{for odd $i+j$},
    \end{array}
  \right. \\
  (P_a^{-\beta})_{ij} & = (P_{ij}^{-\beta} - P_{ji}^{-\beta})/2 = \left\{
    \begin{array}{ll}
      0 & \text{for even $i+j$} \\
      P_{ij}^{-\beta} & \text{for odd $i+j$},
    \end{array}
  \right.
\end{align*}
i.e., the decomposition of spectral characteristics $P^{-\beta}$ and $D^{-\beta}$ into the sum of symmetric and skew-symmetric matrices is reduced to separating elements of the spectral characteristic $P^{-\beta}$ in accordance with evenness and oddness of the sum of their indices.
\end{proof}

\begin{Remark}
Proposition~\ref{propSpOperIFracLR} can be proved in another way. For this, we can rewrite the relation between spectral characteristics $P^{-\beta}$ and $D^{-\beta}$ of fractional integration operators taking into account Property~\eqref{eqSpOper2}, Proposition~\ref{propSpOperIFracRExplicit}, and Equations~\eqref{eqSpOperIFracRExplicit} and~\eqref{eqOperIFracLRConnection}:
\[
  \mathbb{S}[\cm \circ \cj_{T-}^\beta \circ \cm] = \mathbb{S}[\cm] \, \mathbb{S}[\cj_{T-}^\beta] \, \mathbb{S}[\cm], \ \ \ P^{-\beta} = M D^{-\beta} M = M [P^{-\beta}]^\trans M,
\]
where $M$ is the spectral characteristic of the inversion operator.

The multiplication of the matrix $D^{-\beta} = [P^{-\beta}]^\trans$ from the left by the matrix $M$ changes the signs of all the elements in rows with odd numbers, and the multiplication of the matrix $D^{-\beta}$ from the right by the matrix $M$ changes the signs of all the elements in columns with odd numbers. As a result, the product $M [P^{-\beta}]^\trans M$ differs from the matrix $P^{-\beta}$ by signs of elements for which the sum of indices is odd.
\end{Remark}

In addition to spectral characteristics of both the multiplication operator with power function as a multiplier and the fractional integration operator, we will find the two-dimensional spectral characteristic of the covariance function of the fractional Brownian motion, or the spectral characteristic of the corresponding covariance operator.

The two-dimensional spectral characteristic $S^H$ of the covariance function $R_H(\cdot)$ of the fractional Brownian motion $B_H(\cdot)$ given by Equation~\eqref{eqDefFBMCov} and the spectral characteristic $K^H$ of the operator $\ck_H$ given by Equation~\eqref{eqLinearIO} satisfy the relation $S^H = K^H [K^H]^\trans$~\cite{SolSemPeshNed_79}, which is the spectral analogue of Equation~\eqref{eqFBMCov}. However, another representation can be obtained that is more convenient for further analysis.

\begin{Proposition}\label{propSpFBMCov}
Let $S^H$ be the two-dimensional spectral characteristic of the covariance function $R_H(\cdot)$, $\mathbf{1}$ and $F^{2H}$ be spectral characteristics of functions $f_0(t) = 1$ and $f_{2H}(t) = t^{2H}$, and $P^{-(2H+1)}$ be the spectral characteristic of the fractional integration operator $\cj_{0+}^{2H+1}$. All the spectral characteristics are defined relative to the orthonormal basis $\BasisT$ of $\LPT$. Then
\begin{equation}\label{eqSpFBMCov}
  S^H = \frac{1}{2} \Bigl( F^{2H} \mathbf{1}^\trans + \mathbf{1} [F^{2H}]^\trans - \Gamma(2H+1) P^{-(2H+1)} - \Gamma(2H+1) [P^{-(2H+1)}]^\trans \Bigr).
\end{equation}
\end{Proposition}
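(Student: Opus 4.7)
The plan is to expand the $(i,j)$-element of $S^H$ directly from Definition~\ref{defSpFuncTT} and identify each resulting piece with the matrices appearing on the right-hand side. Starting from
\[
  S^H_{ij} = \bigl( q(i,\cdot) \otimes q(j,\cdot), R_H(\cdot) \bigr)_\LPTT = \frac{1}{2}\int_\mathds{T}\!\!\int_\mathds{T} \bigl( t^{2H} + \tau^{2H} - |t-\tau|^{2H} \bigr)\, \hat P(i,t)\, \hat P(j,\tau)\, dt\, d\tau,
\]
I would split the integrand into three summands and analyze each separately. The first two are easy: the double integral of $t^{2H}\hat P(i,t)\hat P(j,\tau)$ factorizes into $F^{2H}_i \cdot \mathbf{1}_j$ because $\int_\mathds{T} \hat P(j,\tau)d\tau = (q(j,\cdot),f_0(\cdot))_\LPT = \mathbf{1}_j$, and this is precisely the $(i,j)$-entry of $F^{2H}\mathbf{1}^\trans$. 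By symmetry the $\tau^{2H}$ term gives the $(i,j)$-entry of $\mathbf{1}[F^{2H}]^\trans$.

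The core step is handling $|t-\tau|^{2H}$. I would split the square $\mathds{T}\times\mathds{T}$ along the diagonal into the regions $\{\tau < t\}$ and $\{\tau > t\}$, so that on the first region $|t-\tau|^{2H} = (t-\tau)^{2H}$ and on the second $|t-\tau|^{2H} = (\tau-t)^{2H}$. Comparing with the kernels in Equations~\eqref{eqDefOperIFracL} and~\eqref{eqDefOperIFracR}, and choosing $\beta = 2H+1$ so that $1-\beta = -2H$, I get
\[
  \int_0^t (t-\tau)^{2H}\hat P(j,\tau)\, d\tau = \Gamma(2H+1)\, \cj_{0+}^{2H+1}\hat P(j,t), \qquad \int_t^T (\tau-t)^{2H}\hat P(j,\tau)\, d\tau = \Gamma(2H+1)\, \cj_{T-}^{2H+1}\hat P(j,t).
\]
Multiplying by $\hat P(i,t)$ and integrating over $\mathds{T}$ turns the first expression into $\Gamma(2H+1)\, P^{-(2H+1)}_{ij}$ (by Definition~\ref{defSpOperT}), and the second into $\Gamma(2H+1)\, D^{-(2H+1)}_{ij}$.

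The final step is to apply Proposition~\ref{propSpOperIFracRExplicit}, which gives $D^{-(2H+1)} = [P^{-(2H+1)}]^\trans$, so that $D^{-(2H+1)}_{ij}$ is the $(i,j)$-entry of $[P^{-(2H+1)}]^\trans$. Collecting all four contributions and writing the result entrywise in matrix form yields Equation~\eqref{eqSpFBMCov}. I do not expect any genuine obstacle here: the argument is essentially a careful bookkeeping exercise, and the only subtlety is recognizing that the two halves of the diagonal split produce $P^{-(2H+1)}$ and its transpose, which is why the symmetric form of the right-hand side appears naturally.
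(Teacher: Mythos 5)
Your proposal is correct and follows essentially the same route as the paper's own proof: expand $S^H_{ij}$ from the definition, factor the $t^{2H}$ and $\tau^{2H}$ terms into $F^{2H}_i\mathbf{1}_j$ and $\mathbf{1}_i F^{2H}_j$, split $|t-\tau|^{2H}$ along the diagonal to recognize $\Gamma(2H+1)\,\cj_{0+}^{2H+1}$ and $\Gamma(2H+1)\,\cj_{T-}^{2H+1}$, and finish with Proposition~\ref{propSpOperIFracRExplicit} to replace $D^{-(2H+1)}$ by $[P^{-(2H+1)}]^\trans$. No gaps.
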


\begin{proof}
Elements of the two-dimensional spectral characteristic $S^H$ are given as follows:
\begin{align*}
  S_{ij}^H & = \bigl( q(i,\cdot) \otimes q(j,\cdot),R_H(\cdot) \bigr)_\LPTT = \int_\mathds{T} \int_\mathds{T} {q(i,t) q(j,\tau) \, \frac{t^{2H} + \tau^{2H} - |t-\tau|^{2H}}{2} \, dt d\tau} \\
  & = \frac{1}{2} \biggl( \int_\mathds{T} q(i,t) t^{2H} dt \int_\mathds{T} q(j,\tau) d\tau + \int_\mathds{T} q(i,t) dt \int_\mathds{T} q(j,\tau) \tau^{2H} d\tau \\
  & \ \ \ {} - \int_\mathds{T} q(i,t) \int_0^t {(t-\tau)^{2H} q(j,\tau) d\tau dt} - \int_\mathds{T} q(i,t) \int_t^T {(\tau-t)^{2H} q(j,\tau) d\tau dt} \biggr),
\end{align*}
or
\begin{align*}
  S_{ij}^H & = \frac{1}{2} \Bigl( \bigl( q(i,\cdot),f_{2H}(\cdot) \bigr)_\LPT \bigl( q(j,\cdot),f_0(\cdot) \bigr)_\LPT + \bigl( q(i,\cdot),f_0(\cdot) \bigr)_\LPT \bigl( q(j,\cdot),f_{2H}(\cdot) \bigr)_\LPT \\
  & \ \ \ {} - \Gamma(2H+1) \bigl( q(i,\cdot),\cj_{0+}^{2H+1} q(j,\cdot) \bigr)_\LPT - \Gamma(2H+1) \bigl( q(i,\cdot),\cj_{T-}^{2H+1} q(j,\cdot) \bigr)_\LPT \Bigr).
\end{align*}

So, according to Definitions \ref{defSpFuncT} and \ref{defSpOperT} for spectral characteristics $\mathbf{1}$, $F^{2H}$, and $P^{-\beta}$, as well as the spectral characteristic $D^{-\beta}$ of the fractional integration operator $\cj_{T-}^\beta$, we have
\begin{align*}
  S_{ij}^H & = \frac{1}{2} \Bigl( F_i^{2H} \mathbf{1}_j + \mathbf{1}_i F_j^{2H} - \Gamma(2H+1) P_{ij}^{-(2H+1)} - \Gamma(2H+1) D_{ij}^{-(2H+1)} \Bigr).
\end{align*}

Representing this equality in the matrix form and taking into account Proposition~\ref{propSpOperIFracRExplicit} and Equation~\eqref{eqSpOperIFracRExplicit}, we obtain Equation~\eqref{eqSpFBMCov}.
\end{proof}

The main result of this section is the representation of spectral characteristics $A^\alpha,P^{-\beta},\linebreak D^{-\beta}$ of operators $\ca_{f_\alpha},\cj_{0+}^\beta,\cj_{T-}^\beta$, respectively, by the spectral characteristic $F^\alpha$ of the function $f_\alpha(\cdot)$ defined by Equations~\eqref{eqSpFAlphaExplicit} or~\eqref{eqSpFAlphaImplicit}. Furthermore, Equation~\eqref{eqSpFAlphaImplicitAlpha} can be used. The additional result is the representation of the two-dimensional spectral characteristic $S^H$ of the covariance function $R_H(\cdot)$ by spectral characteristics $F^\alpha$ and $P^{-\beta}$.

\section{Computationally Stable Algorithms for Calculating Spectral Characteristics}\label{secSpComp}

The above relations to calculate elements of spectral characteristics $A^\alpha,P^{-\beta}$ of operators $\ca_{f_\alpha},\cj_{0+}^\beta$ (similarly, for elements of the spectral characteristic $D^{-\beta}$ of the operator $\cj_{T-}^\beta$) can lead to the incorrect results due to the machine arithmetic errors. Significant errors occur for elements in columns with numbers greater than 20, when using double-precision arithmetic (this is the empirical fact; e.g., such a situation with the Legendre polynomials was discussed in~\cite{Ave_Math24}). In this section, we present computationally stable algorithms to calculate the above spectral characteristics, but first we give the equivalent representation for coefficients~\eqref{eqDefLegCoef}:
\begin{equation}\label{eqDefLegCoefSafe}
  \begin{aligned}
    l_{ik} & = (-1)^{i-k} C^i_{i+k} C^{i-k}_i = (-1)^{i-k} \frac{(i+k)!}{(k!)^2 (i-k)!} \\
    & = (-1)^{i-k} \frac{(i-k+1) \ldots i(i+1) \ldots (i+k)}{(k!)^2} \\
    & = (-1)^{i-k} \prod\limits_{m=1}^k \frac{(i-m+1)(i+m)}{m^2}.
  \end{aligned}
\end{equation}

We can rewrite Equation~\eqref{eqSpAOperAlphaExplicit} for elements of the column matrix:
\[
  A_{ij}^\alpha = \sqrt{\frac{2j+1}{T}} \sum\limits_{k=0}^j \frac{l_{jk}}{T^k} \, F_i^{\alpha+k}, \ \ \ i,j = 0,1,2,\dots,
\]
and apply Equations~\eqref{eqSpFAlphaImplicitAlpha} and~\eqref{eqDefLegCoefSafe}:
\begin{align*}
  \frac{l_{jk}}{T^k} \, F_i^{\alpha+k} & = (-1)^{j-k} \frac{(j-k+1) \ldots j(j+1) \ldots (j+k)(\alpha+1)^2 \dots (\alpha+k)^2}{(k!)^2(\alpha-i+1) \dots (\alpha-i+k) (\alpha+i+2) \dots (\alpha+i+k+1)} \, F_i^\alpha \\
  & = (-1)^{j-k} F_i^\alpha \prod\limits_{m=1}^k \biggl( \frac{\alpha+m}{m} \biggr)^2 \frac{j-m+1}{\alpha-i+m} \, \frac{j+m}{\alpha+i+m+1}.
\end{align*}

Additionally, taking into account Remark~\ref{remSpAOper1}, we obtain
\begin{equation}\label{eqSpAOperAlphaExplicitSafe}
  A_{ij}^\alpha = \left\{
    \begin{array}{ll}
      \displaystyle \sqrt{\frac{2j+1}{T}} \, F_i^\alpha \\
      \displaystyle \ \ \ \times \sum\limits_{k=0}^j (-1)^{j-k} \prod\limits_{m=1}^k \biggl( \frac{\alpha+m}{m} \biggr)^2 \frac{j-m+1}{\alpha-i+m} \, \frac{j+m}{\alpha+i+m+1} & \text{for} ~ i \geqslant j \\
      A_{ji}^\alpha & \text{for} ~ i < j.
    \end{array}
  \right.
\end{equation}

Next, using Equation~\eqref{eqSpOperIFracLExplicit} we find
\[
  P_{ij}^{-\beta} = \frac{1}{\Gamma(\beta+1)} \sqrt{\frac{2j+1}{T}} \sum\limits_{k=0}^j \frac{l_{jk} k!}{T^k (\beta+1)^{\overline{k}}} \, F_i^{\beta+k}, \ \ \ i,j = 0,1,2,\dots,
\]
where
\begin{align*}
  \frac{l_{jk} k!}{T^k (\beta+1)^{\overline{k}}} \, F_i^{\beta+k} & = (-1)^{j-k} \frac{(j-k+1) \ldots j(j+1) \ldots (j+k)(\beta+1) \dots (\beta+k)}{k! (\beta-i+1) \dots (\beta-i+k) (\beta+i+2) \dots (\beta+i+k+1)} \, F_i^\beta \\
  & = (-1)^{j-k} F_i^\beta \prod\limits_{m=1}^k \frac{\beta+m}{m} \, \frac{j-m+1}{\beta-i+m} \, \frac{j+m}{\beta+i+m+1}
\end{align*}
as a consequence of Equations~\eqref{eqSpFAlphaImplicitAlpha} and~\eqref{eqDefLegCoefSafe}.

Applying Proposition~\ref{propSpOperIFracLR} we have
\begin{equation}\label{eqSpOperIFracLExplicitSafe}
  P_{ij}^{-\beta} = \left\{
    \begin{array}{ll}
      \displaystyle \frac{1}{\Gamma(\beta+1)} \sqrt{\frac{2j+1}{T}} \, F_i^\beta \\
      \displaystyle \ \ \ \times \sum\limits_{k=0}^j (-1)^{j-k} \prod\limits_{m=1}^k \frac{\beta+m}{m} \, \frac{j-m+1}{\beta-i+m} \, \frac{j+m}{\beta+i+m+1} & \text{for} ~ i \geqslant j \\
      (-1)^{i+j} P_{ji}^{-\beta} & \text{for} ~ i < j.
    \end{array}
  \right.
\end{equation}

Algorithms to calculate spectral characteristics $A^\alpha$ and $P^{-\beta}$ based on Equations~\eqref{eqSpAOperAlphaExplicitSafe} and~\eqref{eqSpOperIFracLExplicitSafe} show the stable results for first 256 columns (further analysis was not carried out). Equation~\eqref{eqSpOperIFracLExplicitSafe} can be used in the algorithm to calculate the spectral characteristic $D^{-\beta}$ (see Remark~\ref{propSpOperIFrac}).

If $\alpha = n$ and $\beta = m$ are non-negative integers, then relations for elements of spectral characteristics $A^\alpha$ and $P^{-\beta}$ (see Propositions~\ref{propSpAOperAlphaExplicit} and \ref{propSpOperIFracLExplicit}) are significantly simplified. More that, for solving many problems it is sufficient to use spectral characteristics $A^\alpha$ and $P^{-\beta}$ for $\alpha = \beta = 1$, e.g., spectral characteristics $F^0 = \mathbf{1}$, $F^1 = F$, $A^1 = A$, $P^{-1}$ are needed for the spectral representation of the It\^o and Stratonovich iterated stochastic integrals~\cite{Ryb_IOP20, Ryb_Math23}, the modeling of which is required to implement numerical methods for solving stochastic differential equations with high orders of strong or mean-square convergence. Such numerical methods are based on the Taylor--It\^o and Taylor--Stratonovich expansions~\cite{KloPla_92} or on the unified Taylor--It\^o and Taylor--Stratonovich expansions~\cite{Kuz_DUPU23} for solutions to stochastic differential equations.

Equations~\eqref{eqLegRec1} and~\eqref{eqLegRec3} allow one to obtain explicit relations for elements of spectral characteristics $A$ and $P^{-1}$, e.g., they are given in~\cite{SolSemPeshNed_79}. In particular,
\begin{equation}\label{eqSpOperIExplicit}
  P_{ij}^{-1} = \left\{
    \begin{array}{ll}
      \displaystyle \frac{T}{2} & \text{for} ~ i = j = 0 \\
      \displaystyle \frac{T}{2\sqrt{4i^2-1}} & \text{for} ~ i = j + 1 \\
      \displaystyle -\frac{T}{2\sqrt{4j^2-1}} & \text{for} ~ j = i + 1 \\
      \displaystyle 0 \vphantom{\frac12} & \text{otherwise}.
    \end{array}
  \right.
\end{equation}

Recurrence relations can be obtained not only for the spectral characteristic of the function $f_\alpha(\cdot)$, but also for the spectral characteristic of the multiplication operator with this multiplier. So, we can multiply the left-hand and right-hand sides of Equation~\eqref{eqLegRec1} by $\hat P(j,t)$:
\[
  \sqrt{2i+1} \biggl( \frac{2t}{T} - 1 \biggr) \hat P(i,t) \hat P(j,t) = \frac{i+1}{\sqrt{2i+3}} \, \hat P(i+1,t) \hat P(j,t) + \frac{i}{\sqrt{2i-1}} \, \hat P(i-1,t) \hat P(j,t),
\]
or after replacing $i \leftrightarrow j$:
\[
  \sqrt{2j+1} \biggl( \frac{2t}{T} - 1 \biggr) \hat P(i,t) \hat P(j,t) = \frac{j+1}{\sqrt{2j+3}} \, \hat P(i,t) \hat P(j+1,t) + \frac{j}{\sqrt{2j-1}} \, \hat P(i,t) \hat P(j-1,t),
\]
i.e.,
\begin{align*}
  & \frac{i+1}{\sqrt{(2i+1)(2i+3)}} \, \hat P(i+1,t) \hat P(j,t) + \frac{i}{\sqrt{(2i-1)(2i+1)}} \, \hat P(i-1,t) \hat P(j,t) \\
  & \ \ \ = \frac{j+1}{\sqrt{(2j+1)(2j+3)}} \, \hat P(i,t) \hat P(j+1,t) + \frac{j}{\sqrt{(2j-1)(2j+1)}} \, \hat P(i,t) \hat P(j-1,t).
\end{align*}

If we multiply the left-hand and right-hand sides of the latter equality by $t^\alpha$ and integrate over the interval~$\mathds{T}$, then we obtain the following recurrence relation (it can be formally applied for $i = 0$ and $j = 0$):
\begin{align*}
  & \frac{i+1}{\sqrt{(2i+1)(2i+3)}} \, A_{i+1,j}^\alpha + \frac{i}{\sqrt{(2i-1)(2i+1)}} \, A_{i-1,j}^\alpha \\
  & \ \ \ = \frac{j+1}{\sqrt{(2j+1)(2j+3)}} \, A_{i,j+1}^\alpha + \frac{j}{\sqrt{(2j-1)(2j+1)}} \, A_{i,j-1}^\alpha,
\end{align*}
for which the initial conditions are determined by Equation~\eqref{eqSpAOperAlphaExplicit} and Remark~\ref{remSpAOper1}:
\[
  A_{*0}^\alpha = \frac{1}{\sqrt{T}} \, F^\alpha, \ \ \ A_{0*}^\alpha = \frac{1}{\sqrt{T}} \, [F^\alpha]^\trans,
\]
where $F^\alpha$ is the spectral characteristic of the function $f_\alpha(\cdot)$ determined by Equations~\eqref{eqSpFAlphaExplicit},~\eqref{eqSpFAlphaImplicit}, or~\eqref{eqSpFAlphaImplicitAlpha}.

\section{Approximate Representation of Fractional Brownian Motion and Numerical Experiments}\label{secSpFBM}

For the approximation of the fractional Brownian motion, it is natural to use Equation~\eqref{eqSpFBMFinite} as a basis (see Theorem~\ref{thmSpFBMFinite}):
\begin{equation}\label{eqSpFBMFiniteApprox}
  \tilde \cb^H = \tilde K^H \bar \cv,
\end{equation}
in which $\tilde K^H$ is the finite matrix of size $L \times L$, and $\tilde \cb^H$ and $\bar \cv$ are finite random column matrices of size $L$. Elements of $\bar \cv$ are independent random variables having standard normal distribution. Here, it is assumed that
\begin{equation}\label{eqSpFBMFiniteOperApprox}
  \tilde K^H = \left\{ \begin{array}{ll}
    a_H \, \bar P^{-2H} \bar A^{1/2-H} \bar P^{-(1/2-H)} \bar A^{H-1/2} & \text{for} ~ H < 1/2 \\
    a_H \, \bar P^{-1} \bar A^{H-1/2} \bar P^{-(H-1/2)} \bar A^{1/2-H} & \text{for} ~ H > 1/2,
  \end{array} \right.
\end{equation}
where all the matrices on the right-hand side are finite with size $L \times L$, then
\begin{equation}\label{eqSpFBMApprox}
  B_H(\cdot) \approx \tilde B_H(\cdot) = \mathbb{S}^{-1}[\tilde \cb^H] = \sum\limits_{i=0}^{L-1} {\tilde \cb_i^H q(i,\cdot)}.
\end{equation}

The truncated two-dimensional spectral characteristic of the covariance function $\tilde R_H(\cdot)$ of the random process $\tilde B_H(\cdot)$ is given by the relation
$\tilde S^H = \tilde K^H [\tilde K^H]^\trans$, where $\tilde S^H$ is the finite matrix of size $L \times L$. Then
\[
  \tilde R_H(\cdot) = \mathbb{S}^{-1}[\tilde S^H] = \sum\limits_{i,j=0}^{L-1} {\tilde S_{ij}^H q(i,\cdot) \otimes q(j,\cdot)}.
\]

Matrices $\bar A^{H-1/2},\bar A^{1/2-H},\bar P^{-2H},\bar P^{-1},\bar P^{-(1/2-H)},\bar P^{-(H-1/2)}$ are truncated spectral characteristics of corresponding operators, and the matrix $\tilde K^H$ is the truncated spectral characteristic of the operator $\ck_H$ with inaccurately calculated elements. Similarly, $\tilde S^H$ is the truncated spectral characteristic of the function $R_H(\cdot)$ with inaccurately calculated elements. Thus, in this section, the following notations are used: $\bar A$ is the result of truncating an infinite matrix $A$ with a given order~$L$, $\tilde A$ is a truncated infinite matrix $A$ with a given order $L$, whose elements are inaccurately calculated (in general, if $C = AB$, then $\bar C \neq \bar A \bar B = \tilde C$).

For the simulation of the fractional Brownian motion, the following algorithm can be used.

1.\;Specify the right boundary $T$ of the interval $\mathds{T}$.

2.\;Specify the order $L$, which determines sizes of truncated spectral characteristics.

3.\;Find truncated spectral characteristics $\bar A^{1/2-H}$ and $\bar A^{H-1/2}$ (their elements are calculated by Equation~\eqref{eqSpAOperAlphaExplicitSafe} for $i,j = 0,1,\dots,L-1$).

4.\;Find truncated spectral characteristics $\bar P^{-2H}$ and $\bar P^{-(1/2-H)}$ for $H < 1/2$ or $\bar P^{-1}$ and $\bar P^{-(H-1/2)}$ for $H > 1/2$ (their elements are calculated by Equations~\eqref{eqSpOperIFracLExplicitSafe} and~\eqref{eqSpOperIExplicit} for $i,j = 0,1,\dots,L-1$).

5.\;Find the truncated spectral characteristic $\tilde K^H$ by Equation~\eqref{eqSpFBMFiniteOperApprox}.

6.\;Obtain a realization of the truncated spectral characteristic $\bar \cv$ of the Gaussian white noise (elements of $\bar \cv$ are independent random variables having standard normal distribution).

7.\;Obtain a realization of the truncated spectral characteristic $\tilde \cb^H$ of the fractional Brownian motion $B_H(\cdot)$ by Equation~\eqref{eqSpFBMFiniteApprox}.

8.\;Obtain a sample path of the fractional Brownian motion $B_H(\cdot)$ by Equation~\eqref{eqSpFBMApprox}.

For the condition $H = 1/2$, instead of steps~3--5, it is sufficient to find the truncated spectral characteristic $\bar P^{-1}$ and to set $\tilde K^H = \bar P^{-1}$. Here, elements of the matrix $\tilde K^H$ are determined without errors.

If we need to simulate several sample paths, then it is sufficient to repeat steps~6--8 of the above algorithm.

Figures \ref{picPathsA} and \ref{picPathsB} show the sample paths of the fractional Brownian motion $B_H(\cdot)$ obtained by Equations~\eqref{eqSpFBMFiniteApprox}--\eqref{eqSpFBMApprox} for $T = 1$, $L = 64$ and $H = 0.2,0.4,0.6,0.8$. The sample paths of the same color correspond to the one realization for the truncated spectral characteristic $\bar \cv$ of the Gaussian white noise.

\begin{figure}[ht]
  \centering
  \includegraphics[scale = 0.7]{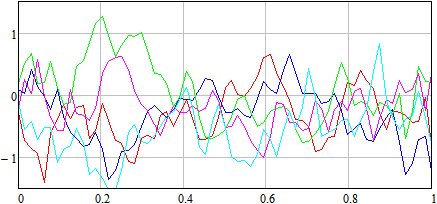}~\includegraphics[scale = 0.7]{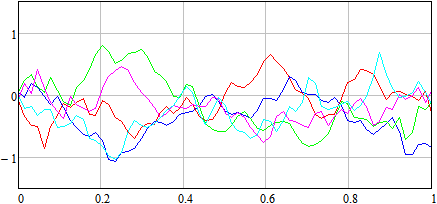}
  \caption{The sample paths of $B_H(\cdot)$ for $H = 0.2$ (left) and $H = 0.4$ (right)}\label{picPathsA}
\end{figure}

\begin{figure}[ht]
  \centering
  \includegraphics[scale = 0.7]{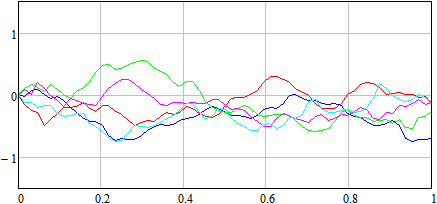}~\includegraphics[scale = 0.7]{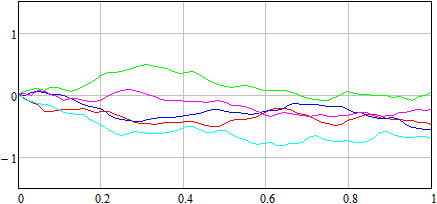}
  \caption{The sample paths of $B_H(\cdot)$ for $H = 0.6$ (left) and $H = 0.8$ (right)}\label{picPathsB}
\end{figure}

Since the fractional Brownian motion is the Gaussian random process, it is defined by both the mathematical expectation and the covariance function. The random process $\tilde B_H(\cdot)$ is also Gaussian, it satisfies the following relations:
\[
  \mathrm{E} \tilde B_H(t) = 0, \ \ \ \mathrm{E} \tilde B_H(t) \tilde B_H(\tau) = \tilde R_H(t,\tau) \ \ \ \forall \, t,\tau \in \mathds{T},
\]
where functions $\tilde R_H(\cdot)$ and $R_H(\cdot)$ are different.

The accuracy measure of the proposed approximate representation of the fractional Brownian motion $B_H(\cdot)$ can be taken as the value
\[
  \varepsilon = \| R_H(\cdot) - \tilde R_H(\cdot) \|_\LPTT
\]
depending on $L$, where $\|\cdot\|_\LPTT$ is the norm in $\LPTT$~\cite{Bal_81}. Here, $\varepsilon$ does not give the estimate value, but the exact value of the approximation error for the covariance function.

The approximation error for the covariance function includes two components. The first one is the error associated with the truncation of the two-dimensional spectral characteristic, i.e., with the transition from the infinite matrix $S^H$ to the finite matrix $\bar S^H$ of size $L \times L$. This error is easy to express by the Parseval's identity:
\[
  \varepsilon_1 = \| R_H(\cdot) - \bar R_H(\cdot) \|_\LPTT = \sqrt{\| R_H(\cdot) \|_\LPTT^2 - \|\bar S^H\|^2},
\]
where
\[
  \bar R_H(\cdot) = \mathbb{S}^{-1}[\bar S^H] = \sum\limits_{i,j=0}^{L-1} {\bar S_{ij}^H q(i,\cdot) \otimes q(j,\cdot)}.
\]

The second one is the error associated with the inaccuracy of calculating elements of the truncated two-dimensional spectral characteristic, i.e., with the transition from the finite matrix $\bar S^H$ to the finite matrix $\tilde S^H$ with the same size. This error is also easy to express by the Parseval's identity:
\[
  \varepsilon_2 = \| \bar R_H(\cdot) - \tilde R_H(\cdot) \|_\LPTT = \|\bar S^H - \tilde S^H\|.
\]

Then, due to the orthogonality condition for functions $R_H(\cdot) - \bar R_H(\cdot)$ and ${\bar R_H(\cdot) - \tilde R_H(\cdot)}$ in $\LPTT$ we have
\[
  \varepsilon^2 = \varepsilon_1^2 + \varepsilon_2^2,
\]
or
\[
  \varepsilon = \sqrt{\| R_H(\cdot) \|_\LPTT^2 - \|\bar S^H\|^2 + \|\bar S^H - \tilde S^H\|^2},
\]
where, in this formula and formulae for components $\varepsilon_1$ and $\varepsilon_2$, the notation $\|\cdot\|$ means the Euclidean matrix norm, and the squared norm of the function $R_H(\cdot)$ is represented as follows (see Appendix):
\[
  \| R_H(\cdot) \|_\LPTT^2 = \frac{T^{4H+2}}{4} \biggl( \frac{4H+3}{(2H+1)(4H+1)} - \frac{4 \Gamma^2(2H+1)}{\Gamma(4H+3)} \biggr).
\]

The approximation errors $\varepsilon$ are given in Table~\ref{tabSpCovError1} for $T = 1$, $L = 4,8,\dots,256$ and $H = 0.1,0.2,\dots,0.9$.

\begin{table}[ht]
\begin{center}
\renewcommand{\arraystretch}{1.1}
\caption{The approximation errors $\varepsilon$ for the covariance function $R_H(\cdot)$}\label{tabSpCovError1}
\begin{tabular}{|c|c|c|c|c|c|c|c|}
  \hline
  $H$ & $L = 4$ & $L = 8$ & $L = 16$ & $L = 32$ & $L = 64$ & $L = 128$ & $L = 256$ \\
  \hline
  \hline
  0.1 & 0.105598 & 0.080131 & 0.060316 & 0.045402 & 0.034217 & 0.025817 & 0.019497 \\
  0.2 & 0.038433 & 0.019872 & 0.010714 & 0.005859 & 0.003225 & 0.001782 & 0.000988 \\
  0.3 & 0.026981 & 0.011369 & 0.005098 & 0.002336 & 0.001080 & 0.000502 & 0.000233 \\
  0.4 & 0.020052 & 0.007247 & 0.002806 & 0.001115 & 0.000448 & 0.000181 & 0.000073 \\
  0.5 & 0.014086 & 0.004342 & 0.001451 & 0.000500 & 0.000175 & 0.000061 & 0.000022 \\
  0.6 & 0.009608 & 0.002469 & 0.000708 & 0.000211 & 0.000064 & 0.000020 & 0.000006 \\
  0.7 & 0.006589 & 0.001749 & 0.000646 & 0.000271 & 0.000118 & 0.000052 & 0.000023 \\
  0.8 & 0.016856 & 0.009943 & 0.005903 & 0.003444 & 0.001992 & 0.001148 & 0.000660 \\
  0.9 & 0.079804 & 0.061176 & 0.046793 & 0.035585 & 0.027001 & 0.020472 & 0.015517 \\
  \hline
\end{tabular}
\end{center}
\end{table}

The analysis of the obtained results including the approximate value of the convergence rate, i.e., the parameter $\gamma$, for which $\varepsilon \approx C/L^\gamma$, $C > 0$, is presented below.

The minimum approximation accuracy corresponds to $H = 0.9$: the relative error is $23.0\%$ for $L = 4$ and $4.5\%$ for $L = 256$. A similar situation is for $H = 0.1$: the relative error is $22.4\%$ for $L = 4$ and $4.1\%$ for $L = 256$. For these Hurst indices, the observed convergence rate is $\gamma > 1.3$. Even this can be considered as the acceptable result: the truncation order $L = 4$ is too small for the simulation of the fractional Brownian motion in applications, and it is necessary to choose a significantly larger value that ensures the required approximation accuracy.

The maximum approximation accuracy is achieved at $H = 0.6$ and $H = 0.7$: the relative error is $2.6\%$ for $L = 4$ ($H = 0.7$) and smaller than $0.002\%$ for $L = 256$. The Hurst index $H = 0.6$ corresponds to the observed convergence rate $\gamma > 3.2$. For other Hurst indices, we limit ourselves to indicating only the observed convergence rate: $\gamma > 1.8$ for $H = 0.2$, $\gamma > 2.1$ for $H = 0.3$, $\gamma > 2.4$ for $H = 0.4$, $\gamma > 2.8$ for $H = 0.5$, $\gamma > 2.2$ for $H = 0.7$, $\gamma > 1.6$ for $H = 0.8$.

Furthermore, for comparison, the approximation errors $\varepsilon_1$ for the same $H$ and $L$ are given in Table~\ref{tabSpCovError2}. The zero values in this table are for the approximation errors smaller than $10^{-6}$.

\begin{table}[ht]
\begin{center}
\renewcommand{\arraystretch}{1.1}
\caption{The approximation errors $\varepsilon_1$ for the covariance function $R_H(\cdot)$}\label{tabSpCovError2}
\begin{tabular}{|c|c|c|c|c|c|c|c|}
  \hline
  $H$ & $L = 4$ & $L = 8$ & $L = 16$ & $L = 32$ & $L = 64$ & $L = 128$ & $L = 256$ \\
  \hline
  \hline
  0.1 & 0.031922 & 0.017782 & 0.010518 & 0.006369 & 0.003894 & 0.002390 & 0.001469 \\
  0.2 & 0.032408 & 0.015808 & 0.008203 & 0.004342 & 0.002315 & 0.001238 & 0.000663 \\
  0.3 & 0.026000 & 0.011073 & 0.005008 & 0.002307 & 0.001071 & 0.000498 & 0.000232 \\
  0.4 & 0.019188 & 0.007059 & 0.002767 & 0.001107 & 0.000446 & 0.000181 & 0.000073 \\
  0.5 & 0.013515 & 0.004230 & 0.001431 & 0.000496 & 0.000174 & 0.000061 & 0.000022 \\
  0.6 & 0.009121 & 0.002387 & 0.000695 & 0.000209 & 0.000064 & 0.000019 & 0.000006 \\
  0.7 & 0.005792 & 0.001246 & 0.000312 & 0.000081 & 0.000022 & 0.000006 & 0.000002 \\
  0.8 & 0.003283 & 0.000572 & 0.000123 & 0.000028 & 0.000006 & 0.000001 & 0.000000 \\
  0.9 & 0.001399 & 0.000195 & 0.000036 & 0.000007 & 0.000001 & 0.000000 & 0.000000 \\
  \hline
\end{tabular}
\end{center}
\end{table}

The approximation error $\varepsilon_1$ corresponds to another approximate representation of the fractional Brownian motion:
\begin{equation}\label{eqSpFBMApproxRoot}
  B_H(\cdot) \approx \hat B_H(\cdot) = \mathbb{S}^{-1}[\hat \cb^H] = \sum\limits_{i=0}^{L-1} {\hat \cb_i^H q(i,\cdot)},
\end{equation}
where
\begin{equation}\label{eqSpFBMFiniteApproxRoot}
  \hat \cb^H = \hat K^H \bar \cv,
\end{equation}
and the finite matrix $\hat K^H$ of size $L \times L$ satisfies the condition $\bar S^H = \hat K^H [\hat K^H]^\trans$, in which $\hat K^H$ is obtained by the Cholesky decomposition~\cite{HorJohn_13}.

For any pair $H$ and $L$ the inequality $\varepsilon \geqslant \varepsilon_1$ holds, and for many pairs $H$ and $L$ the approximation error $\varepsilon$ is significantly greater than $\varepsilon_1$. However, Equations~~\eqref{eqSpFBMApproxRoot} and~\eqref{eqSpFBMFiniteApproxRoot} do not correspond to Equation~\eqref{eqDefFBMFinite}. The result of their application is the approximate representation of the fractional Brownian motion $B_H(\cdot)$, which is not consistent with the Brownian motion $B(\cdot)$. In fact, the difference between Equations~\eqref{eqSpFBMApprox} and~\eqref{eqSpFBMApproxRoot} is the same as between strong and weak solutions of the stochastic differential equation~\cite{Oks_00}, i.e., Equations~\eqref{eqSpFBMApprox} and~\eqref{eqSpFBMApproxRoot} give strong and weak approximations of the fractional Brownian motion, respectively.

If the weak approximation is sufficient, then Equation~\eqref{eqSpFBMApproxRoot} can be used with Equation~\eqref{eqSpFBMFiniteApproxRoot} (it is necessary to obtain the truncated two-dimensional spectral characteristic $S^H$; see Proposition~\ref{propSpFBMCov}). For the strong approximation, Equation~\eqref{eqSpFBMApprox} can be used with Equations~\eqref{eqSpFBMFiniteApprox} and~\eqref{eqSpFBMFiniteOperApprox} (it is necessary to find truncated spectral characteristics $\bar A^{H-1/2},\bar A^{1/2-H},\bar P^{-2H},\bar P^{-1},\bar P^{-(1/2-H)},\bar P^{-(H-1/2)}$; see Propositions~\ref{propSpAOperAlphaExplicit} and \ref{propSpOperIFracLExplicit}).

For the case $H = 1/2$, we can compare the approximation errors corresponding to various orthonormal bases. For the case $H \neq 1/2$, this is not so easy due to the complexity of obtaining analytical expressions for elements of required spectral characteristics (this is a separate problem for each orthonormal basis). If $H = 1/2$, then it is sufficient to apply the previously obtained relations for the spectral characteristic $P^{-1}$ of the integration operator $\cj$~\cite{SolSemPeshNed_79}. The approximation errors $\varepsilon$ and $\varepsilon_1$ are given in Tables~\ref{tabSpCovError1a} and \ref{tabSpCovError2a}, respectively, for $L = 4,8,\dots,256$. Notations of orthonormal bases are: $(P)$ for the Legendre polynomials~\eqref{eqDefLeg}, $(C)$ for cosines (for the expansion of even functions in the Fourier series)~\cite{Ryb_Symm23}, $(W)$ for the Walsh functions (the same result for the Haar functions)~\cite{Ryb_DUPU23}, $(F)$ for trigonometric functions (the Fourier basis)~\cite{Ryb_Symm23}.

It is easy to see that the Legendre polynomials provide the smaller approximation error. This result can be improved, e.g., by choosing eigenfunctions of the covariance operator corresponding to the covariance function of the Brownian motion, but for $H \neq 1/2$ the same difficulties with analytical expressions for elements of spectral characteristics will appear. In fact, the results for the Walsh functions show the approximation error, when using the numerical integration to simulate of the fractional Brownian motion.

\begin{table}[ht]
\begin{center}
\renewcommand{\arraystretch}{1.1}
\caption{The approximation errors $\varepsilon$ for the covariance function $R_H(\cdot)$ (comparison of different orthonormal bases)}\label{tabSpCovError1a}
\begin{tabular}{|c|c|c|c|c|c|c|c|}
  \hline
  Basis & $L = 4$ & $L = 8$ & $L = 16$ & $L = 32$ & $L = 64$ & $L = 128$ & $L = 256$ \\
  \hline
  \hline
  $(P)$ & 0.014086 & 0.004342 & 0.001451 & 0.000500 & 0.000175 & 0.000061 & 0.000022 \\
  $(C)$ & 0.020134 & 0.006448 & 0.002163 & 0.000744 & 0.000259 & 0.000091 & 0.000032 \\
  $(W)$ & 0.069877 & 0.035516 & 0.017901 & 0.008986 & 0.004502 & 0.002253 & 0.001127 \\
  $(F)$ & 0.117073 & 0.086677 & 0.063033 & 0.045242 & 0.032236 & 0.022883 & 0.016212 \\
  \hline
\end{tabular}
\end{center}
\end{table}

\begin{table}[ht]
\begin{center}
\renewcommand{\arraystretch}{1.1}
\caption{The approximation errors $\varepsilon_1$ for the covariance function $R_H(\cdot)$ (comparison of different orthonormal bases)}\label{tabSpCovError2a}
\begin{tabular}{|c|c|c|c|c|c|c|c|}
  \hline
  Basis & $L = 4$ & $L = 8$ & $L = 16$ & $L = 32$ & $L = 64$ & $L = 128$ & $L = 256$ \\
  \hline
  \hline
  $(P)$ & 0.013515 & 0.004230 & 0.001431 & 0.000496 & 0.000174 & 0.000061 & 0.000022 \\
  $(C)$ & 0.019591 & 0.006340 & 0.002141 & 0.000740 & 0.000258 & 0.000091 & 0.000032 \\
  $(W)$ & 0.069096 & 0.035325 & 0.017853 & 0.008974 & 0.004499 & 0.002252 & 0.001127 \\
  $(F)$ & 0.115177 & 0.085934 & 0.062750 & 0.045137 & 0.032199 & 0.022869 & 0.016207 \\
  \hline
\end{tabular}
\end{center}
\end{table}

\section{Conclusions}\label{secSpConcl}

In this paper, a new representation of the fractional Brownian motion $B_H(\cdot)$ with the Hurst index $H \in (0,1)$ is obtained. It is based on the spectral form of mathematical description and the spectral method (the Legendre polynomials are used as the orthonormal basis). The spectral characteristic of the linear integral operator defining the fractional Brownian motion is found, i.e., the matrix of expansion coefficients for a function of two variables, which is the kernel of this linear integral operator. This spectral characteristic is expressed as a product of four matrices: two spectral characteristics of multiplication operators with power functions as multipliers and two spectral characteristics of fractional integration operators.

This approach has several advantages:

1.\;Using the spectral form of mathematical description allows one to obtain the approximation of the fractional Brownian motion in continuous time, and the approximation error can be exactly calculated.

2.\;When choosing the Legendre polynomials, the approximation error is smaller compared to other orthonormal bases or to the numerical integration.

3.\;The representation of the spectral characteristic of the linear integral operator defining the fractional Brownian motion as a product of four matrices allows one to form computationally stable algorithms.

4.\;Spectral characteristics of multiplication operators and spectral characteristics of fractional integration operators can be used in other problems not related to the fractional Brownian motion, e.g., for solving fractional differential equations.

In addition, the two-dimensional spectral characteristic of the covariance function $R_H(\cdot)$ is found. To express it, spectral characteristics of both the power function and the fractional integration operator are used. This result also allows one to obtain the approximation of the fractional Brownian motion in continuous time.

\section*{Appendix}

Here, we calculate the squared norm for the covariance function $R_H(\cdot)$ of the fractional Brownian motion $B_H(\cdot)$:
\begin{align*}
  & \| R_H(\cdot) \|_\LPTT^2 = \int_\mathds{T} \int_\mathds{T} \biggl( \frac{t^{2H} + \tau^{2H} - |t-\tau|^{2H}}{2} \biggr)^2 dt d\tau \\
  & = \frac{1}{4} \int_\mathds{T} \int_\mathds{T} (t^{4H} + 2t^{2H} \tau^{2H} + \tau^{4H} - 2t^{2H}|t-\tau|^{2H} - 2\tau^{2H}|t-\tau|^{2H} + |t-\tau|^{4H}) dt d\tau.
\end{align*}

We can find integrals for all the terms separately. For the first three of them, this is simple:
\begin{align*}
  \int_\mathds{T} \int_\mathds{T} t^{4H} dt d\tau & = \int_\mathds{T} \int_\mathds{T} \tau^{4H} dt d\tau = \frac{T^{4H+2}}{4H+1}, \\
  \int_\mathds{T} \int_\mathds{T} 2t^{2H} \tau^{2H} dt d\tau & = 2 \biggl( \int_\mathds{T} t^{2H} dt \biggr)^2 = \frac{2T^{4H+2}}{(2H+1)^2}.
\end{align*}

Next,
\begin{align*}
  \int_\mathds{T} \int_\mathds{T} 2t^{2H}|t-\tau|^{2H} dt d\tau & = 2 \int_\mathds{T} t^{2H} \int_0^t (t-\tau)^{2H} d\tau dt + 2 \int_\mathds{T} t^{2H} \int_t^T (\tau-t)^{2H} d\tau dt, \\
  \int_\mathds{T} \int_\mathds{T} 2\tau^{2H}|t-\tau|^{2H} dt d\tau & = 2 \int_\mathds{T} \int_0^t \tau^{2H} (t-\tau)^{2H} d\tau dt + 2 \int_\mathds{T} \int_t^T \tau^{2H} (\tau-t)^{2H} d\tau dt,
\end{align*}
where
\begin{align*}
  \int_\mathds{T} t^{2H} \int_0^t (t-\tau)^{2H} d\tau dt & = \int_\mathds{T} \frac{t^{4H+1}}{2H+1} \, dt = \frac{T^{4H+2}}{(2H+1)(4H+2)}, \\
  \int_\mathds{T} \int_0^t \tau^{2H} (t-\tau)^{2H} d\tau dt & = \int_\mathds{T} \frac{t^{4H+1} \Gamma^2(2H+1)}{\Gamma(4H+2)} \, dt = \frac{T^{4H+2} \Gamma^2(2H+1)}{(4H+2)\Gamma(4H+2)}.
\end{align*}

According to the rule for the fractional integration by parts~\cite{SamKilMar_87}, we have
\begin{align*}
  \int_\mathds{T} t^{2H} \int_t^T (\tau-t)^{2H} d\tau dt & = \int_\mathds{T} \int_0^t \tau^{2H} (t-\tau)^{2H} d\tau dt, \\
  \int_\mathds{T} \int_t^T \tau^{2H} (\tau-t)^{2H} d\tau dt & = \int_\mathds{T} t^{2H} \int_0^t (t-\tau)^{2H} d\tau dt,
\end{align*}
hence,
\begin{align*}
  & \int_\mathds{T} \int_\mathds{T} 2t^{2H}|t-\tau|^{2H} dt d\tau \\
  & \ \ \ = \int_\mathds{T} \int_\mathds{T} 2\tau^{2H}|t-\tau|^{2H} dt d\tau = \frac{2T^{4H+2}}{(2H+1)(4H+2)} + \frac{2T^{4H+2} \Gamma^2(2H+1)}{(4H+2)\Gamma(4H+2)}.
\end{align*}

Finally, for the last term we can write the following result:
\[
  \int_\mathds{T} \int_\mathds{T} |t-\tau|^{4H} dt d\tau = 2 \int_\mathds{T} \int_0^t (t-\tau)^{4H} d\tau dt = 2 \int_\mathds{T} \frac{t^{4H+1}}{4H+1} dt = \frac{T^{4H+2}}{(2H+1)(4H+1)}.
\]

Thus,
\begin{align*}
  & \| R_H(\cdot) \|_\LPTT^2 \\
  & \ \ \ = \frac{T^{4H+2}}{4} \biggl( \frac{2}{4H+1} + \frac{2}{(2H+1)^2} - \frac{4}{(2H+1)(4H+2)} - \frac{4 \Gamma^2(2H+1)}{(4H+2)\Gamma(4H+2)} \\
  & \ \ \ \ \ \ {} + \frac{1}{(2H+1)(4H+1)} \biggr) = \frac{T^{4H+2}}{4} \biggl( \frac{4H+3}{(2H+1)(4H+1)} - \frac{4 \Gamma^2(2H+1)}{\Gamma(4H+3)} \biggr).
\end{align*}

\end{document}